\theoremstyle{plain} 
\newtheorem{theorem}{Theorem}
\newtheorem{lemma}{Lemma}
\newtheorem{corollary}{Corollary}
\newtheorem{proposition}{Proposition}
\newtheorem*{conjecture*}{Conjecture}
\newtheorem*{theorem*}{Theorem}
\newtheorem*{assumption*}{Assumption}
\theoremstyle{plain}
\theoremstyle{remark}
\theoremstyle{definition}
\newtheorem*{notations*}{Notations}
\newtheorem*{acknowledgment*}{Acknowledgments}
\numberwithin{equation}{section}
\let\sl\l
\renewcommand\l{%
	\leavevmode
  \ifmmode
    \left
  \else
    \sl
  \fi}
\newcommand\swapcommand[2]{%
\let\swaptemp#1
\let#1#2
\let#2\swaptemp
}
\newcommand\set[2]{%
\left\{ #1 \; \middle| \; #2 \right\}
}
\swapcommand{\SS}{\S}
\renewcommand{\S}{\mathscr{S}}
\newcommand{\CC}{\mathbb{C}}
\newcommand{\RR}{\mathbb{R}}
\newcommand{\ZZ}{\mathbb{Z}}
\newcommand{\e}{\varepsilon}
\newcommand{\s}{\sigma}
\newcommand{\mc}{\mathcal}
\newcommand{\Lam}{\Lambda}
\newcommand{\qqquad}{\qquad \qquad \qquad}
\newcommand{\qqqquad}{\qquad \qquad \qquad \qquad}
\newcommand{\et}{\tilde{\operatorname{\eta}}}
\newcommand{\meas}{\operatorname{meas}}
\newcommand{\sgn}{\operatorname{sgn}}
\renewcommand{\a}{\alpha}
\renewcommand{\b}{\beta}
\renewcommand{\r}{\right}
\renewcommand{\d}{\displaystyle}
\renewcommand{\Re}{\operatorname{Re}}
\renewcommand{\Im}{\operatorname{Im}}
\renewcommand{\epsilon}{\varepsilon}
\title[Extreme values of $\et_{m}(s)$]
{Extreme values for iterated integrals of the logarithm of the Riemann zeta-function}
\author[S. INOUE]{Sh\={o}ta Inoue}
\address{Graduate School of Mathematics, Nagoya University,
Furocho, Chikusaku, Nagoya 464-8602, Japan}
\email{m16006w@math.nagoya-u.ac.jp}
\keywords{The Riemann zeta-function, The value distribution of the Riemann zeta-function, Large deviations}
\subjclass[2010]{Primary 11M06; Secondary 60F10}
\begin{document}

\maketitle

\begin{abstract}
In this paper, we give an approximate formula for the measure of extreme values for 
the logarithm of the Riemann zeta-function and its iterated integrals.
The result recovers the unconditional best result for the $\Omega$-result of $S_{1}(t)$ for the part of minus of Tsang.
\end{abstract}

\section{\textbf{Introduction and the statement of results}}

In this paper, we discuss the value distribution of $\et_{m}(s)$ in the critical strip.
Here, the function $\et_{m}(s)$ is defined by the recurrence equation
\begin{align*}
\et_{m}(\s + it) = \int_{\s}^{\infty}\et_{m-1}(\a + it)d\a,
\end{align*}
where $\et_{0}(\s+it) = \log{\zeta(\s + it)}$, and $\zeta(s)$ is the Riemann zeta-function.
Here, we decide the branch of the logarithm of the Riemann zeta-function as follows.
Let $s = \s+it \in \CC$ with $\s, t \in \RR$.
When $t$ is equal to neither zero nor the ordinate of nontrivial zeros of $\zeta(s)$, 
we choose the branch by the continuation with the initial condition
$\lim_{\s \rightarrow +\infty}\log{\zeta(\s + it)} = 0$.
If $t = 0$, then $\log{\zeta(\s)} = \lim_{\e \downarrow 0}\log{\zeta(\s + i\e)}$. 
If $t$ is the ordinate of a nontrivial zero $\rho = \b + i\gamma$ of the Riemann zeta-function, then 
$\log{\zeta(\s + i\gamma)} = \lim_{\e \downarrow 0}\log{\zeta(\s + i(\gamma - \sgn(\gamma)\e))}$.

The function $\et_{m}$ is related to the well known function $S_{m}(t)$, 
which has been studied by many mathematicians including \cite{L1924}, \cite{SS}.
Here, $S_{m}(t)$ is defined by the recurrence equation, for $m \in \ZZ_{\geq 1}$,
\begin{align*}
S_{m}(t) = \int_{0}^{t}S_{m-1}(u)du + b_{m}.
\end{align*}
Here, $S_{0}(t) = \frac{1}{\pi}\Im\log{\zeta(\frac{1}{2}+it)}$, 
and $b_{m} = \frac{1}{\pi (m-1)!}\Im i^{m}\int_{\frac{1}{2}}^{\infty}(\a - \frac{1}{2})^{m-1}\log{\zeta(\a)}d\a$.
To explain the relation between $\et_{m}(\s+it)$ and $S_{m}(t)$, 
we also define the function $\eta_{m}(s)$ by the recurrence equation
\begin{align*}
\eta_{m}(\s + it) = \int_{0}^{t}\eta_{m-1}(\s + iu)du + c_{m}(\s),
\end{align*}
where $\eta_{0}(\s+it) = \log{\zeta(\s + it)}$, and 
$c_{m}(\s) = \frac{i^{m}}{(m-1)!}\int_{\s}^{\infty}(\a - \s)^{m-1}\log{\zeta(\a)}d\a$.
Then the function $S_{m}(t)$ is clearly equal to $\frac{1}{\pi}\Im \eta_{m}(1/2+it)$. 
In the following, $\rho = \b + i\gamma$ means a nontrivial zero of the Riemann zeta-function.
Under this notation, the relation between $\et_{m}(s)$ and $\eta_{m}(s)$ is understood by the equation, for $m \geq 1$,
\begin{align}	\label{RLEE}
&\eta_{m}(\s+it) \\
&= i^{m}\et_{m}(\s+it)
+2\pi \sum_{k = 0}^{m-1}\frac{i^{m-1-k}}{(m-k)! k!}\sum_{\substack{0 < \gamma < t \\ \b > \s}}(\b - \s)^{m-k}(t-\gamma)^{k}.
\end{align}
This can be obtained by Lemma 1 of \cite{II2019} and the fact
$\et_{m}(\s+it) = \frac{1}{(m-1)!}\int_{\s}^{\infty}(\a-\s)^{m-1}\log{(\a+it)}d\a$ 
that can be easily obtained by integration by parts.
Hence, it holds that $S_{1}(t) = \pi^{-1}\Re\et_{1}(1/2+it)$, 
and additionally if the Riemann Hypothesis is true, 
then $S_{m}(t) = \frac{1}{\pi}\Im i^{m}\et_{m}(1/2+it)$ for any $m \in \ZZ_{\geq 0}$.
The difference between $\eta_{m}(1/2+it)$ and $\et_{m}(1/2+it)$ was firstly studied by Fujii \cite{Fu2002}, and
he gave a statement for the magnitude of $\eta_{m}(1/2+it)$ which is equivalent to the Riemann Hypothesis.
From this perspective, the function $\et_{m}(s)$ is an interesting object.

The study of the value distribution of $\et_{m}(s)$ is important because it is directly related to the Lindel\"of Hypothesis.
It is known that (cf. Theorems 13.6 (B) and 13.8 in \cite{T}) 
the Lindel\"of Hypothesis is equivalent to the estimate 
$\Re \et_{1}(1/2+it) = \pi S_{1}(t) = o(\log{t})$ as $t \rightarrow + \infty$.
Additionally, we can generalize this fact to the following proposition.

\begin{proposition}	\label{JGGG}
Let $m \in \ZZ_{\geq 1}$. The Lindel\"of Hypothesis is equivalent to the estimate
$
\Re \et_{m}(1/2+it) = o(\log{t})
$
as $t \rightarrow + \infty$.
\end{proposition}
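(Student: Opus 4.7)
The plan is to treat the two directions of the equivalence separately, with the case $m=1$ (Theorem 13.6 (B) of \cite{T}) as the base. The common starting point is the integral representation
$$
\Re\et_m(1/2+it)=\f{1}{(m-1)!}\int_{1/2}^{\infty}(\a-1/2)^{m-1}\log|\zeta(\a+it)|d\a,
$$
obtained by taking the real part in the formula for $\et_m$ given in the introduction. For the direction LH $\Rightarrow \Re\et_m(1/2+it)=o(\log t)$, I will use that LH yields $\log|\zeta(\a+it)|\leq\e\log t+O(1)$ uniformly for $\a\geq 1/2$, which controls the positive part of the integrand. The $m=1$ case of the proposition, combined with this positive-part bound, then yields $\int_{1/2}^{\infty}\log^{-}|\zeta(\a+it)|d\a=o(\log t)$ for the negative part. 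Since the weight $(\a-1/2)^{m-1}/(m-1)!$ is bounded on $[1/2,2]$ and the tail $\int_{2}^{\infty}$ contributes $O(1)$ via the absolutely convergent Dirichlet series for $\log\zeta$, this direction closes.

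For the reverse direction, the plan is to recover $\Re\et_1(1/2+it)=o(\log t)$ from the hypothesis and to conclude LH by the $m=1$ case. Since $\partial_{s}^{m-1}\et_m=(-1)^{m-1}\et_1$ in any region where $\log\zeta$ is holomorphic, Cauchy's integral formula gives
$$
\et_1(s_0)=\f{(-1)^{m-1}(m-1)!}{2\pi i}\oint_{C}\f{\et_m(s)}{(s-s_0)^m}ds
$$
on any small closed contour $C$ around $s_0$ avoiding zeros of $\zeta$. To bound $|\et_m(s)|$ on $C$ by $o(\log t)$, I will apply Borel--Carath\'eodory, which translates an upper bound on $\Re\et_m$ on a slightly larger disk into a bound on $|\et_m|$ on the smaller one. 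The required two-dimensional upper bound on $\Re\et_m$ is produced by a Phragm\'en--Lindel\"of argument on a thin rectangle $[1/2,1/2+\e_0]\times[t-H,t+H]$: the hypothesis $\Re\et_m(1/2+iu)=o(\log t)$ on the left edge, the trivial estimate $O(1)$ on the right edge, and the unconditional bound $O(\log t)$ on the top and bottom combine via harmonic measure to yield $\Re\et_m=o(\log t)$ at interior points near the left edge, provided $H\to\infty$ with $\e_0/H\to 0$. Feeding this into the Cauchy integral at a contour centered at $s_0=(1/2+r)+it$ gives $|\et_1((1/2+r)+it)|=o(\log t)$, and the identity $\Re\et_1(1/2+it)=\Re\et_1((1/2+r)+it)+\int_{1/2}^{1/2+r}\log|\zeta(\a+it)|d\a$ with $r=r(t)\to 0$ at a suitable rate recovers $\Re\et_1(1/2+it)=o(\log t)$.

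The main obstacle is the harmonic-measure calculation required in the Phragm\'en--Lindel\"of step: without LH, the only available bound on $\Re\et_m$ off the critical line is $O(\log t)$, so the aspect ratio of the rectangle has to be tuned so that the top and bottom boundary contributions are genuinely damped to $o(\log t)$ at points near the left edge, not merely to $O(\log t)$. A secondary difficulty is the shift-back step: the error term $\int_{1/2}^{1/2+r}\log|\zeta(\a+it)|d\a$ is only $O(r\log t)$ unconditionally, which forces $r=r(t)\to 0$, but the Cauchy factor $r^{1-m}$ in the integral formula must then be balanced against the small $o(\log t)$ bound available on $|\et_m|$ along the contour.
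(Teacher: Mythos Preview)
The paper does not give a proof; it only says the argument follows ``almost the same method as Theorems 13.6(B) and 13.8 in \cite{T}''. Your forward direction is fine and is essentially that method: LH bounds $\log^{+}|\zeta|$ uniformly, the $m=1$ case then forces $\int_{1/2}^{\infty}\log^{-}|\zeta(\a+it)|\,d\a=o(\log t)$, and the bounded weight $(\a-1/2)^{m-1}$ on $[1/2,2]$ transfers this to $\Re\et_m$.

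The reverse direction, however, does not work as planned. The central difficulty is that $\et_m$ is \emph{not holomorphic} to the left of any zero off the critical line: for each zero $\rho=\beta+i\gamma$ with $\beta>1/2$, the function $\Im\et_m(\sigma+it)$ has a jump of size $\frac{2\pi}{m!}(\beta-\sigma)^{m}$ across the half-line $\{\sigma+i\gamma:\sigma<\beta\}$ (this is exactly what the zero sum in \eqref{RLEE} encodes). Since you are trying to \emph{deduce} LH, you cannot assume RH, so such zeros and their leftward cuts may well meet your disk; consequently neither Borel--Carath\'eodory nor Cauchy's integral formula applies to $\et_m$ there. Saying the contour ``avoids zeros of $\zeta$'' does not help, because the discontinuity lies along a ray, not only at the zero itself. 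Even setting this aside, the balance you flag as a ``secondary difficulty'' is in fact unresolvable for $m\geq 2$: a fixed radius $r$ makes the shift-back $\int_{1/2}^{1/2+r}\log|\zeta(\a+it)|\,d\a$ only $O(r\log t)$, while $r\to 0$ makes the Cauchy factor $r^{1-m}$ blow up, and there is no intermediate regime since the hypothesis supplies only a qualitative $o(\log t)$ bound on $\Re\et_m$ with no rate to trade against $r^{1-m}$.

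Titchmarsh's route avoids all of this by going through Backlund's criterion (Theorem 13.5 in \cite{T}): LH is equivalent to $N(\sigma,T+1)-N(\sigma,T)=o(\log T)$ for every fixed $\sigma>1/2$. One extracts this local zero count directly from the hypothesis on $\Re\et_m(1/2+it)=\frac{1}{(m-1)!}\int_{1/2}^{\infty}(\a-1/2)^{m-1}\log|\zeta(\a+it)|\,d\a$ by a Littlewood-lemma/Jensen-type computation on short rectangles, exactly as in 13.6(B) and 13.8; the extra weight $(\a-1/2)^{m-1}$ only changes constants. No holomorphicity of $\et_m$ is required, only the subharmonicity of $\log|\zeta|$.
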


We omit the proof of this proposition in this paper because
it can be proved by almost the same method as Theorems 13.6 (B) and 13.8 in \cite{T}.
In view of Proposition \ref{JGGG}, it is desirable to understand the exact behavior of $\et_{m}(\s+it)$.
Incidentally, we can show the estimate $\et_{m}(1/2+it) \ll_{m} \log{(|t| + 2)}$ for $m \geq 1$ by the standard way.

Recently, $\Omega$-estimates on $S_{m}(t)$ have been developed by some studies
such as \cite{BS2018}, \cite{Ch2019}, \cite{CM2020} under the Riemann Hypothesis.
Those results were shown by the resonance method due to Bondarenko and Seip \cite{BS2018}, \cite{BS2017}.
On the other hand, as mentioned in \cite{BS2018}, 
it is desired that those could be shown unconditionally by proving a stronger result for the measure of extreme values like 
Soundararajan's result \cite[Theorem 1]{SE2008}.
In this paper, the author shows a result toward this problem.

Now, we define the set $\S_{m, \theta}(T, V; \s)$ by
\begin{align}	\label{def_S}
\S_{m, \theta}(T, V; \s)
:= \set{t \in [T, 2T]}{\Re(e^{-i\theta} \et_{m}(\s + it)) > V}.
\end{align}
The symbol $\meas(\cdot)$ stands for the Lebesgue measure on $\RR$.
Then we show the following theorem.

\begin{theorem}	\label{Main_Thm_EVE}
Let $m \in \ZZ_{\geq 1}$, $\theta \in \RR$ be fixed.
There exists a positive constant $a_{1} = a_{1}(m)$ such that,
for any large numbers $T$, $V$ with $V \leq a_{1}\l(\frac{\log{T}}{(\log{\log{T}})^{2m+2}}\r)^{\frac{m}{2m+1}}$,
we have
\begin{align}	\label{MTEVE1}
\frac{1}{T}\meas(\S_{m, \theta}(T, V; 1/2))
= \exp\l( - 2m 4^{m} V^2(\log{V})^{2m}\l( 1 + R \r) \r), 
\end{align}
where the error term $R$ satisfies
\begin{align*}
R \ll_{m}
\frac{V^{2m+1} (\log{V})^{2m(m+1)}}{(\log{T})^{m}} + \sqrt{\frac{\log{\log{V}}}{\log{V}}}.
\end{align*}
\end{theorem}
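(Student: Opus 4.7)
The plan is to reduce the problem to the large-value distribution of a short Dirichlet polynomial
$P(t)=\sum_{p\le X}(\log p)^{-m}p^{-1/2-it}$
and then to analyze its exponential moment. As a first step I would establish an approximate formula
\[
\et_m(1/2+it)=\sum_{p\le X}\frac{1}{(\log p)^{m}\, p^{1/2+it}}+(\text{prime-power part})+(\text{error}),
\]
valid outside a small exceptional subset of $[T,2T]$, for $X=X(V,T)$ to be chosen. This would come from integrating a Selberg-type approximation for $\log\zeta(\s+it)$ in the $\s$-variable $m$ times: since $\log\zeta(s)=\sum_{n}\Lambda(n)/((\log n)n^{s})$, each iteration introduces an extra factor $1/\log n$, producing the prime weight $(\log p)^{-m}$. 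The prime-power part is absolutely bounded, and the exceptional set must be shown to have measure much smaller than the target $\exp(-2m\cdot4^{m}V^{2}(\log V)^{2m})$.

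For the upper bound I would apply Markov with an exponential moment:
\[
\meas\bigl(\S_{m,\theta}(T,V;1/2)\bigr)\le e^{-\lambda V}\int_T^{2T}\exp\bigl(\lambda\Re(e^{-i\theta}P(t))\bigr)\,dt\qquad(\lambda>0).
\]
Near-orthogonality of $p^{-it}$ for distinct primes lets the integrand factor, so the right side is asymptotically $T\prod_{p\le X}I_{0}\bigl(\lambda(\log p)^{-m}p^{-1/2}\bigr)$, valid as long as $X^{C\lambda}\le T$. Taking logarithms and splitting the Bessel sum at the transition point $p^{*}\approx\lambda^{2}/(\log\lambda)^{2m}$ (where $\lambda(\log p)^{-m}p^{-1/2}\asymp 1$), one uses $\log I_{0}(z)\sim z$ for $z\ge 1$ and $\log I_{0}(z)\sim z^{2}/4$ for $z\le1$. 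The small-$z$ regime dominates and, by partial summation with $\pi(t)\sim t/\log t$, produces
\[
\sum_{p\le X}\log I_{0}\bigl(\lambda(\log p)^{-m}p^{-1/2}\bigr)\sim\frac{\lambda^{2}}{8m(2\log\lambda)^{2m}}.
\]
Optimising $\lambda V-\lambda^{2}/(8m(2\log\lambda)^{2m})$ in $\lambda$ gives $\lambda_{*}\sim 4mV(2\log V)^{2m}$ and optimised exponent exactly $2m\cdot 4^{m}V^{2}(\log V)^{2m}$, matching the theorem.

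For the matching lower bound I would use the standard exponential tilt. Under the normalised density $e^{\lambda_{*}P(t)}/Z$, a second-moment computation (again reducing to Dirichlet-polynomial moments) shows that $\Re(e^{-i\theta}P(t))$ has new mean $\Lambda'(\lambda_{*})=V$ and variance $\Lambda''(\lambda_{*})\ll 1/(\log V)^{2m}$, so this tilted measure concentrates in a window around $V$. Translating back to Lebesgue measure via $\lambda_{*}V-\Lambda(\lambda_{*})=2m\cdot 4^{m}V^{2}(\log V)^{2m}$ gives the reverse inequality to the same leading order.

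The hard part is executing the Bessel-sum asymptotics with enough precision to secure both error terms in $R$. The $\sqrt{\log\log V/\log V}$ reflects the subleading correction in $\log I_{0}(z)=z-\tfrac{1}{2}\log(2\pi z)+O(1/z)$ and the next term in $\log p^{*}=2\log\lambda-2m\log\log\lambda+\cdots$ around the transition window $p\approx p^{*}$; tracking these uniformly is what forces the square-root loss. The polynomial term $V^{2m+1}(\log V)^{2m(m+1)}/(\log T)^{m}$ arises from the Dirichlet-polynomial approximation in the first step (truncation at $X\asymp V^{2}(\log V)^{2m+2}$ combined with the Selberg-type remainder), and imposing $R\ll 1$ then recovers precisely the stated admissible range $V\le a_{1}(\log T/(\log\log T)^{2m+2})^{m/(2m+1)}$.
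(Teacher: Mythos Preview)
Your overall strategy---approximate $\et_m(1/2+it)$ by the prime sum $P(t)=\sum_{p\le X}p^{-1/2-it}(\log p)^{-m}$ outside a small exceptional set, then control the exponential moment of $P$ via the Bessel product $\prod_{p\le X}I_0(\lambda p^{-1/2}(\log p)^{-m})$, and extract the distribution by a tilt/Laplace argument---is exactly the paper's approach (the paper phrases the lower bound as an $\e$-window in the Laplace transform rather than an explicit tilt, but these are equivalent). The Bessel asymptotics and the optimiser $\lambda_*\sim 4mV(2\log V)^{2m}$ are correct.

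The genuine gap is your choice of $X$. You propose $X\asymp V^2(\log V)^{2m+2}$, only a logarithmic factor above the Bessel transition point $p^*\asymp V^2(\log V)^{2m}$, and this breaks both halves of the argument. First, the Bessel product carries a correction factor $1-(\log\lambda^2/\log X)^{2m}$ (the paper's Lemma~\ref{PB0E}); with $\log X\asymp\log V$ this factor is $O(\log\log V/\log V)$, not $1+o(1)$, so you lose the main term entirely. Second, the $2k$-th moment of the Selberg-type remainder is $\ll C^k k!/(\log X)^{2km}+C^k k^{2k(m+1)}/(\log T)^{2km}$ (the paper's Lemma~\ref{ESEPE}, resting on \cite[Theorem~5]{II2019}); with $\log X\asymp\log V$ the first term forces the remainder to be $\gtrsim V$ once the exceptional set is pushed below $\exp(-cV^2(\log V)^{2m})$, which is useless for comparing levels. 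The paper instead takes $X=T^{c/(V^2(\log V)^{2m})}$, the \emph{largest} $X$ compatible with the moment computation (the true constraint is $X^{c\lambda W}\le T$ with $W\asymp V$, not $X^{C\lambda}\le T$, since one must expand $\exp(\lambda P)$ to order $\asymp\lambda W$ after restricting to $|P|\le W$). With this $X$ one has $\log X\gg\log V$, the Bessel correction is $1+o(1)$, and the remainder is $\Delta=\log T/(\log X)^{m+1}\asymp V^{2m+2}(\log V)^{2m(m+1)}/(\log T)^m$; dividing by $V$ gives precisely the polynomial part of $R$. So that error term reflects pushing $X$ to its \emph{upper} limit and paying for it in the Selberg remainder, not truncating low.
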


This theorem contains the unconditional best result 
$S_{1}(t) = \Omega_{-}\l(\frac{(\log{t})^{1/3}}{(\log{\log{t}})^{4/3}}\r)$ due to Tsang \cite{Ts1986}.
Actually, we can immediately obtain the following corollary.

\begin{corollary}
Let $m \in \ZZ_{\geq 1}$, $\theta \in \RR$ be fixed. Then we have
\begin{align*}
\Re e^{-i\theta} \et_{m}(1/2+it) 
= \Omega_{\pm}\l( \frac{(\log{t})^{\frac{m}{2m+1}}}{(\log{\log{t}})^{\frac{2m^2+2m}{2m+1}}} \r).
\end{align*}
\end{corollary}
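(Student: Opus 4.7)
My plan is to derive the corollary from Theorem~\ref{Main_Thm_EVE} by choosing $V$ to be a small constant multiple of the target lower bound and then checking that the corresponding set $\S_{m,\theta}(T,V;1/2)$ retains positive measure for every large $T$. The argument is essentially reductive: all the analytic content is packaged in the theorem, and what remains is an algebraic verification of exponents at the boundary of the admissible range.

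Concretely, I would fix a small constant $c \in (0, a_{1}]$ and set
\[
V = V(T) := c \cdot \frac{(\log T)^{m/(2m+1)}}{(\log\log T)^{(2m^{2}+2m)/(2m+1)}},
\]
which automatically satisfies the admissibility hypothesis $V \leq a_{1}(\log T / (\log\log T)^{2m+2})^{m/(2m+1)}$ of Theorem~\ref{Main_Thm_EVE}. A direct computation gives $\log V = \frac{m}{2m+1}\log\log T + O_{m}(\log\log\log T)$, and using the identity $(4m^{2}+4m)/(2m+1) - 2m = 2m/(2m+1)$ one obtains
\[
V^{2}(\log V)^{2m} \sim \l(\frac{m}{2m+1}\r)^{\!2m} c^{2}\l(\frac{\log T}{\log\log T}\r)^{\!2m/(2m+1)}.
\]
Because $2m/(2m+1) < 1$, the quantity $2m\cdot 4^{m}V^{2}(\log V)^{2m}$ is $o(\log T)$. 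A parallel computation yields $V^{2m+1}(\log V)^{2m(m+1)}/(\log T)^{m} = O_{m}(c^{2m+1})$, so for $c$ small the error term $R$ in (\ref{MTEVE1}) stays bounded, and the theorem gives
\[
\meas(\S_{m,\theta}(T,V(T);1/2)) \geq T\exp\l(-C_{m,c}\l(\tfrac{\log T}{\log\log T}\r)^{\!2m/(2m+1)}\r),
\]
which is strictly positive once $T$ is large.

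Positivity of this measure produces, for every large $T$, some $t \in [T,2T]$ with $\Re(e^{-i\theta}\et_{m}(1/2+it)) > V(T)$. Since $\log t \asymp \log T$ and $\log\log t \asymp \log\log T$ throughout $[T,2T]$, such a $t$ realises (after possibly shrinking $c$) the lower bound asserted in the corollary, giving the $\Omega_{+}$ statement. The matching $\Omega_{-}$ conclusion follows by running the same argument with $\theta$ replaced by $\theta + \pi$, since this substitution negates $\Re(e^{-i\theta}\et_{m})$ in the definition (\ref{def_S}).

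The main (and essentially only) obstacle is bookkeeping: one must check that at the extreme admissible scale the $\log\log T$ exponents balance so that $V^{2}(\log V)^{2m}$ is a strictly sublinear power of $\log T$, and that with the precise normalisation $(\log\log T)^{(2m^{2}+2m)/(2m+1)}$ in the denominator of $V$ the error term $R$ does not blow up. Any other polynomial-in-$\log\log T$ adjustment of $V$ would either violate the admissibility hypothesis of Theorem~\ref{Main_Thm_EVE} or make the resulting measure estimate trivial.
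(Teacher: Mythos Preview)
Your approach is correct and matches what the paper intends: the corollary is stated there as an immediate consequence of Theorem~\ref{Main_Thm_EVE}, and your choice of $V$ at the extremal admissible scale together with the $\theta \mapsto \theta+\pi$ trick for $\Omega_{-}$ is exactly the right reduction. One small remark: once Theorem~\ref{Main_Thm_EVE} applies, the right-hand side of \eqref{MTEVE1} is $\exp$ of a real number and hence automatically positive, so the careful bookkeeping on $R$ and on $V^{2}(\log V)^{2m}$ being $o(\log T)$, while correct, is not strictly needed to conclude that $\S_{m,\theta}(T,V(T);1/2)$ is nonempty.
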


If the Riemann Hypothesis is true, we can improve this corollary.
Actually, assuming the Riemann Hypothesis, 
Bondarenko-Seip \cite{BS2018}, Chirre \cite{Ch2019}, and Chirre-Matahab \cite{CM2020} showed that, for certain $\theta$,
\begin{align*}
\Re e^{-i\theta} \et_{m}(1/2+it) 
= \Omega_{+}\l( \frac{\sqrt{\log{t} \log{\log{\log{t}}}}}{(\log{\log{t}})^{m + 1/2}} \r).
\end{align*}
Moreover, by using Tsang's method \cite{Ts1986}, we can prove that for any fixed $\theta \in \RR$, 
\begin{align}	\label{Omega_Tsang}
\Re e^{-i\theta} \et_{m}(1/2+it) 
= \Omega_{+}\l( \frac{\sqrt{\log{t}}}{(\log{\log{t}})^{m + 1/2}} \r)
\end{align}
under the Riemann Hypothesis.
The author cannot find a suitable reference for the latter $\Omega$-result, 
but it is not difficult to check it.
Furthermore, Tsang \cite{Ts1993} showed this $\Omega$-estimate unconditionally in the case $\theta = 0$, $m = 1$.
As we mentioned above, it seems desirable to establish a stronger result for the measure of extreme values 
of $\et_{m}(1/2+it)$ corresponding to the above $\Omega$-results.
Therefore, we hope to prove asymptotic formula \eqref{MTEVE1} for larger $V$, 
but the author was not able to prove it.
In the following, we observe this matter.
To prove Theorem \ref{Main_Thm_EVE}, we in this paper use the fact that 
$\et_{m}(s)$ looks roughly like
\begin{align}	\label{MGHKFCm}
\et_{m}(s) \approx \sum_{p \leq X}\frac{1}{p^{s}(\log{p})^{m}} + \text{contribution from zeros}.
\end{align}
This is an analogue of the fact obtained by the hybrid formula of Gonek, Hughes, and Keating \cite{GHK2007}.
Hence, we need to understand the value distribution of the Dirichlet polynomial $\sum_{p \leq X}\frac{1}{p^{s}(\log{p})^{m}}$
and to estimate the contribution from zeros.
For the value distribution of the Dirichlet polynomial, we can obtain the following proposition.


\begin{proposition}	\label{Main_Prop_EVE}
Let $m \in \ZZ_{\geq 1}$, $\theta \in \RR$ be fixed.
There exist positive constants $a_{2} = a_{2}(m)$, $a_{3} = a_{3}(m)$ such that
for large numbers $T, V, X$ with $V \leq a_{2}\frac{\sqrt{\log{T}}}{(\log{\log{T}})^{m + 1/2}}$, and
$V^{4} \leq X \leq T^{a_{3}/V^2(\log{V})^{2m}}$, we have
\begin{align*}
&\frac{1}{T}\meas\set{t \in [T, 2T]}{\Re e^{-i\theta} \sum_{p \leq X}\frac{1}{p^{1/2+it}(\log{p})^{m}} > V}\\
&=\exp\l( - \frac{2m 4^{m} V^2(\log{V})^{2m}}{1 - \l(\frac{\log{V^2}}{\log{X}}\r)^{m}}
\l( 1 + O_{m}\l( \sqrt{\frac{\log{\log{V}}}{\log{V}}} \r) \r) \r).
\end{align*}
\end{proposition}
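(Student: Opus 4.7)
The plan is to analyse the Dirichlet polynomial $P(s) := \sum_{p\le X}p^{-s}(\log p)^{-m}$ as an approximately independent sum over primes and carry out a Cram\'er-type large deviations calculation, deriving matching upper and lower bounds by a sharp moment generating function computation. Write $R(t):=\Re(e^{-i\theta}P(1/2+it))=\sum_{p\le X}w_{p}\cos(t\log p-\theta)$ with $w_{p}:=p^{-1/2}(\log p)^{-m}$, and introduce
\begin{align*}
M(\lambda):=\frac{1}{T}\int_{T}^{2T}e^{\lambda R(t)}\,dt.
\end{align*}
The first step is to compute $M(\lambda)$ in closed form via the Jacobi--Anger expansion $e^{z\cos\phi}=\sum_{n\in\ZZ}I_{n}(z)e^{in\phi}$: applied prime by prime and multiplied out, the orthogonality of $\{p^{it}\}$ on $[T,2T]$ collapses the sum onto the ``diagonal'' multi-index $(n_{p})\equiv0$, giving $M(\lambda)=\exp(\Psi_{X}(\lambda))\cdot(1+o(1))$ where $\Psi_{X}(\lambda):=\sum_{p\le X}\log I_{0}(\lambda w_{p})$. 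Off-diagonal error terms arise from nontrivial integer relations $\prod p^{n_{p}}=1$ and are negligible precisely in the range $V\le a_{2}\sqrt{\log T}/(\log\log T)^{m+1/2}$ and $X\le T^{a_{3}/V^{2}(\log V)^{2m}}$ imposed in the hypothesis.

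The second step is to obtain a sharp asymptotic for $\Psi_{X}(\lambda)$. Split the primes at the crossover $P_{\lambda}$ defined by $\lambda w_{P_{\lambda}}\asymp 1$, so $P_{\lambda}\asymp\lambda^{2}/(\log\lambda)^{2m}$. On $p\le P_{\lambda}$ use the large-argument expansion $\log I_{0}(z)=z-\tfrac{1}{2}\log(2\pi z)+O(1/z)$; on $P_{\lambda}<p\le X$ use the small-argument expansion $\log I_{0}(z)=z^{2}/4+O(z^{4})$. Combined with the prime-number-theorem sums $\sum_{p\le P}w_{p}\sim2\sqrt{P}/(\log P)^{m+1}$ and $\sum_{P<p\le X}w_{p}^{2}\sim\frac{1}{2m}\bigl((\log P)^{-2m}-(\log X)^{-2m}\bigr)$, the dominant contribution to $\Psi_{X}$ comes from the quadratic regime. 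Solving the saddle-point equation $\Psi_{X}'(\lambda^{*})=V$ self-consistently (so that $\log P_{\lambda^{*}}\sim2\log V=\log V^{2}$) yields
\begin{align*}
\lambda^{*}\sim\frac{4m\cdot4^{m}\,V(\log V)^{2m}}{1-(\log V^{2}/\log X)^{m}},\qquad \Psi_{X}^{*}(V):=\lambda^{*}V-\Psi_{X}(\lambda^{*})\sim\tfrac{1}{2}\lambda^{*}V,
\end{align*}
which is exactly the claimed exponent. The upper bound then follows at once from Chernoff's inequality: $T^{-1}\meas(R>V)\le e^{-\lambda^{*}V}M(\lambda^{*})=\exp(-\Psi_{X}^{*}(V)(1+o(1)))$.

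For the matching lower bound, I would work with the MGF-tilted measure $d\mu_{\lambda}(t):=e^{\lambda R(t)}\,dt/(TM(\lambda))$, under which $\mathbb{E}_{\mu_{\lambda}}R=\Psi_{X}'(\lambda)$ and $\text{Var}_{\mu_{\lambda}}R=\Psi_{X}''(\lambda)=:\sigma_{\lambda}^{2}$. To convert concentration into a one-sided tail I would shift the saddle, choosing $\lambda^{**}$ with $\Psi_{X}'(\lambda^{**})=V+K\sigma_{\lambda^{*}}$ for a fixed large constant $K$, so that Chebyshev's inequality under $\mu_{\lambda^{**}}$ gives $\mu_{\lambda^{**}}(V\le R\le V+2K\sigma_{\lambda^{*}})\ge 1-1/K^{2}$. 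Then
\begin{align*}
\tfrac{1}{T}\meas(R>V)\ge e^{-\lambda^{**}(V+2K\sigma_{\lambda^{*}})}M(\lambda^{**})(1-1/K^{2}),
\end{align*}
which, using $\lambda^{*}\sigma_{\lambda^{*}}\sim V(\log V)^{m}\ll\Psi_{X}^{*}(V)\sim V^{2}(\log V)^{2m}$, equals $\exp(-\Psi_{X}^{*}(V)(1+o(1)))$.

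The main technical obstacle is the quantitative error control needed to extract the sharp relative error $O(\sqrt{\log\log V/\log V})$ in the exponent rather than a cruder $O(1/\log V)$. Two sources contribute: the off-diagonal error in the Jacobi--Anger step, which is exactly what forces the range $X\le T^{a_{3}/V^{2}(\log V)^{2m}}$ and must be estimated by bounding the contribution of each multi-index $(n_{p})\ne 0$ via the Dirichlet-polynomial second moment; and the subleading corrections in the asymptotic of $\Psi_{X}(\lambda)$ together with the self-consistent approximation $\log P_{\lambda^{*}}=2\log V+O(\log\log V)$, whose iterated $\log\log V/\log V$-sized error, propagated through the Legendre transform and through the truncation term $(\log V^{2}/\log X)^{m}$ in the denominator, produces the precise error rate. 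Tracking these uniformly over the whole range of $V$ and $X$ allowed by the hypotheses is the delicate bookkeeping step that makes the proof nontrivial.
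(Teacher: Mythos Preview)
Your overall plan is the same large-deviations strategy the paper uses: compute the moment generating function of $R(t)$ via the Bessel product $\prod_{p\le X}I_{0}(\lambda w_{p})$, evaluate its logarithm asymptotically, and invert by a saddle-point/Legendre-transform argument. Two technical differences are worth flagging, and one of them is a genuine gap in your sketch.

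\textbf{The restriction to a bounded set.} The paper does not compute $M(\lambda)$ directly on $[T,2T]$; it first restricts to the set $\mathcal{A}=\{t\in[T,2T]:|P(1/2+it)|\le W\}$ with $W\asymp V$, and shows separately (Lemma~\ref{ESAE}) that $\meas([T,2T]\setminus\mathcal{A})\le T\exp(-cV^{2}(\log V)^{2m})$. On $\mathcal{A}$ one may truncate the exponential series at $k\le Y=e^{2}\lambda W$ and then apply the moment formula $\frac{1}{T}\int_{T}^{2T}R^{k}\,dt=k!\sum_{\Omega(n)=k}f(n)g_{X}(n)n^{-1/2}+O(X^{2k}/T)$ term by term; the error $O(X^{2k}/T)$ is under control precisely because $k\le Y$ and $X\le T^{1/W^{2}(\log W)^{2m}}$. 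Your direct Jacobi--Anger expansion runs into the same issue in disguise: the off-diagonal sum ranges over all $(n_{p})\in\ZZ^{\pi(X)}\setminus\{0\}$ (not over ``nontrivial relations $\prod p^{n_{p}}=1$'', which do not exist), and the individual error $\ll T^{-1}\prod_{p}p^{|n_{p}|}$ summed against $\prod_{p}|I_{n_{p}}(\lambda w_{p})|$ diverges unless you truncate. The restriction to $\mathcal{A}$, or an equivalent truncation device, is the missing ingredient that makes this step rigorous in the stated range.

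\textbf{Conversion from MGF to tail.} The paper does not use Chernoff plus a tilted-measure Chebyshev bound; it uses a sandwich argument. Writing $\int_{-\infty}^{\infty}e^{xv}\meas(\S^{*}(T,v))\,dv$ and evaluating it at $x$, $x(1-\e)$, $x(1+\e)$ with $\e=K\sqrt{\log\log x/\log x}$, one shows that the contributions from $v<V(1-\e)$ and $v>V(1+\e)$ are each at most one third of the whole; monotonicity of $v\mapsto\meas(\S^{*}(T,v))$ then squeezes the measure at $V$. This is also where the precise error $O(\sqrt{\log\log V/\log V})$ comes from: one needs $\e^{2}$ to dominate the relative error $O(\log\log x/\log x)$ in Lemma~\ref{PB0E}, forcing $\e\asymp\sqrt{\log\log V/\log V}$, and the final exponent inherits an $O(\e)$ relative error. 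Your explanation that the square root arises from ``propagating'' a $\log\log V/\log V$-sized error through the Legendre transform is not quite right; a pure Legendre-transform propagation would preserve the relative error, and your Chebyshev route would in principle do better---but it requires controlling $(\log M)''(\lambda)$, not just $M(\lambda)$, which your sketch does not address. The paper's sandwich method needs only the MGF at three nearby points and is more robust for that reason.
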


This proposition contains an $\Omega$-result for Dirichlet polynomials corresponding to $\Omega$-result \eqref{Omega_Tsang}.
Actually, it follows from Proposition \ref{Main_Prop_EVE} that, for $X = (\log{T})^{4}$,
\begin{align*}
\max_{t \in [T, 2T]}\Re e^{-i\theta} \sum_{p \leq X}\frac{1}{p^{1/2+it}(\log{p})^{m}}
\geq c \frac{\sqrt{\log{T}}}{(\log{\log{T}})^{m + 1/2}}
\end{align*}
for some constant $c > 0$.
Hence, for Dirichlet polynomials, we can prove unconditionally a result for the measure 
that contains the $\Omega$-result corresponding to \eqref{Omega_Tsang}.
On the other hand, we use the result of the previous work of the author \cite[Theorem 5]{II2019} to estimate the contribution 
from zeros based on the zero density estimate of Selberg \cite[Theorem 1]{SCR}. 
However, it is difficult to obtain the satisfactory estimate of the contribution from zeros 
since there are many zeros near the critical line, 
and so the author has been not yet able to prove Theorem \ref{Main_Thm_EVE} for larger $V$.
Additionally, he was not able to prove it even under the Riemann Hypothesis.


So far, we described the results in the case $\s = 1/2$.
On the other hand, the method of the proof of the above assertions can be also applied to the case $\frac{1}{2} < \s < 1$.
Moreover, we can estimate the contribution from zeros in this case satisfactorily.
Thanks to that, we can obtain a theorem 
which is an analogue of the works due to  
Lamzouri \cite{L2011}.
We define $A_{m}(\s)$ by
\begin{align}	\label{def_A_m}
A_{m}(\s)
= \l(\frac{\s^{2\s}}{(1 - \s)^{2\s - 1 + m} G(\s)^{\s}}\r)^{\frac{1}{1-\s}}.
\end{align}
Here, $G(\s) = \int_{0}^{\infty}\log{I_{0}(u)}u^{-1-\frac{1}{\s}}du$, 
and $I_{0}$ is the modified $0$-th Bessel function defined by 
$I_{0}(z) 
= \frac{1}{2\pi}\int_{-\pi}^{\pi}\exp(z \cos{\theta})d\theta 
= \sum_{n = 0}^{\infty}(z / 2)^{2n} / (n!)^2$.

\begin{theorem}	\label{Main_Thm_EVZE}
Let $m \in \ZZ_{\geq 0}$, $\frac{1}{2} < \s < 1$, and $\theta \in \RR$ be fixed.
There exists a positive constant $a_{4} = a_{4}(\s, m)$ such that, 
for any large numbers $T$, $V$ with $V \leq a_{4}\frac{(\log{T})^{1-\s}}{(\log{\log{T}})^{m+1}}$, we have
\begin{align}	\label{MTEVZE1}
\frac{1}{T}\meas(\S_{m, \theta}(T, V; \s))
= \exp\l( - A_{m}(\s) V^{\frac{1}{1-\s}}(\log{V})^{\frac{m+\s}{1 - \s}}\l( 1 + R \r) \r), 
\end{align}
where the error term $R$ satisfies the estimate
\begin{align}	\label{EREEVZE}
R \ll_{\s, m} \sqrt{\frac{1 + m\log{\log{V}}}{\log{V}}}.
\end{align}
\end{theorem}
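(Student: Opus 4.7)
The plan is to follow the same three-step strategy suggested for Theorem \ref{Main_Thm_EVE}, now adapted to the strip $1/2 < \s < 1$ where the contribution from the nontrivial zeros is far better behaved. Namely: (i) approximate $\et_{m}(\s + it)$ by a short Dirichlet polynomial over primes plus an error term reflecting the nearby zeros; (ii) establish a large-deviation estimate for the Dirichlet polynomial via the saddle-point method applied to its Laplace transform; (iii) transfer the resulting asymptotic from the Dirichlet polynomial to $\et_{m}$ itself.

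For step (i), I would use the analogue of \eqref{MGHKFCm}: for a parameter $X = X(T, V)$ to be chosen (a fixed power of $\log{T}$ should suffice),
\begin{align*}
\et_{m}(\s + it) = \sum_{p \le X} \frac{1}{p^{\s + it}(\log{p})^{m}} + E_{m}(\s + it; X),
\end{align*}
where $E_{m}$ collects the contribution from nontrivial zeros and a smooth error. Since $\s > 1/2$, Theorem 5 of \cite{II2019} together with the zero density estimate of Selberg \cite{SCR} bounds the $L^{2k}([T, 2T])$-norm of $E_{m}$ by a quantity much smaller than $V$ throughout the range of $V$ in the statement. This is where the present theorem differs sharply from Theorem \ref{Main_Thm_EVE}: in the strip, zeros with $\b > \s$ are genuinely sparse, so the error from the zero contribution is acceptable.

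For step (ii), set $P(t) := \Re e^{-i\theta}\sum_{p \le X}\frac{1}{p^{\s + it}(\log{p})^{m}}$. I would compute the Laplace transform $T^{-1}\int_{T}^{2T} e^{\lam P(t)}\,dt$ by expanding into prime products and using the near-orthogonality of $\{p^{-it}\}$ on $[T, 2T]$, matching it up to acceptable relative error with its random-model counterpart $\prod_{p \le X} I_{0}\l(\lam / (p^{\s}(\log{p})^{m})\r)$. The prime number theorem converts this product into
\begin{align*}
\int_{2}^{X} \log{I_{0}}\l(\frac{\lam}{x^{\s}(\log{x})^{m}}\r) \frac{dx}{\log{x}},
\end{align*}
and the substitution $u = \lam / (x^{\s}(\log{x})^{m})$ reduces its asymptotic evaluation to the integral $G(\s)$ that appears in \eqref{def_A_m}. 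Solving the saddle-point equation pins the optimal $\lam$ in terms of $V$, and a Chernoff-type argument with matching upper and lower bounds yields the asymptotic $\exp\l(- A_{m}(\s) V^{1/(1-\s)}(\log{V})^{(m+\s)/(1-\s)}(1 + R)\r)$ with the explicit constant from \eqref{def_A_m}.

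The main technical obstacle will be obtaining the error term \eqref{EREEVZE} with its precise shape. This requires a delicate two-sided saddle-point analysis of $\sum_{p \le X}\log{I_{0}}(\lam / (p^{\s}(\log{p})^{m}))$: the range where the argument of $\log{I_{0}}$ is small must be handled through the Taylor expansion $\log{I_{0}}(u) = u^{2}/4 + O(u^{4})$, while the complementary range uses $\log{I_{0}}(u) = u + O(\log{u})$; matching the two regimes through the $(\log{p})^{m}$ weight is precisely what produces both the logarithmic factor $(\log{V})^{(m + \s)/(1-\s)}$ in the main term and the $m\log{\log{V}}$ contribution in the error. Once this is in hand, transferring from $P(t)$ to $\Re e^{-i\theta}\et_{m}(\s + it)$ is routine given the $L^{2k}$-bound on $E_{m}$ from step (i), via a standard H\"older-type splitting combined with the already-established tail bound for $P(t)$.
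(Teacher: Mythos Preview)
Your proposal is correct and follows essentially the same approach as the paper: step (ii) is packaged there as Proposition~\ref{Main_Prop_EVZE} (proved via Lemmas~\ref{PB0EG} and~\ref{GRKL}, exactly the substitution and $I_{0}$-asymptotics you describe), and steps (i)/(iii) are carried out via Lemma~\ref{ESEPE} together with a sandwich argument on the good set $\mathcal{B}$. The only cosmetic difference is that the paper takes $X = T^{b_{6}/V^{1/(1-\s)}(\log{V})^{(m+\s)/(1-\s)}}$ rather than a fixed power of $\log{T}$; both choices work in the range of $V$ under consideration.
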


When $m = 0$, the asymptotic formula of this type was firstly proved by Hattori and Matsumoto\footnote{
There is a difference of the range of $t$ between ours and theirs, but it seems not essential.
Precisely, our range of $t$ is $t \in [T, 2T]$, and theirs is $t \in [-T, T]$.} \cite{HM1999}.
They showed that, for $\frac{1}{2} < \s < 1$,
\begin{align}	\label{HMAF}
&\lim_{T \rightarrow +\infty}\frac{1}{T}\meas\l( \bigcup_{j = 0}^{3}\S_{0, \frac{\pi}{2}j}(T, V; \s) \r)
= \exp\l( -A_{0}(\s)V^{\frac{1}{1-\s}}(\log{V})^{\frac{\s}{1-\s}}(1 + o(1)) \r)
\end{align}
as $V \rightarrow +\infty$.  
Note that the parameter $V$ in their asymptotic formula is not effective with respect to $T$.
Theorem \ref{Main_Thm_EVZE} can recover this asymptotic formula effectively.
Actually, we see that
\begin{align*}
\frac{1}{T}\meas\l( \S_{0, 0}(T, V; \s) \r)
&\leq \frac{1}{T}\meas\l( \bigcup_{j = 0}^{3}\S_{0, \frac{\pi}{2}j}(T, V; \s) \r)\\
&\leq \frac{1}{T}\sum_{j = 0}^{3}\meas\l( \S_{0, \frac{\pi}{2}j}(T, V; \s)\r),
\end{align*}
and both sides are equal to $\exp\l( -A_{0}(\s)V^{\frac{1}{1-\s}}(\log{V})^{\frac{\s}{1-\s}}(1 + R) \r)$ 
from Theorem \ref{Main_Thm_EVZE}. 
Here, the error term $R$ satisfies \eqref{EREEVZE}.
Hence, we can improve \eqref{HMAF} to the effective form. 
On the other hand, it seems this improvement has been essentially obtained by Lamzouri's work \cite{L2011}.
After the study of Hattori-Matsumoto, Lamzouri \cite{L2011} showed an effective asymptotic formula in the case $\theta = 0$ only.
Though he did not mention, we can also prove his theorem for any $\theta \in \RR$ by just using his method. 
Therefore, we may say that the above improvement has been already given by Lamzouri.

Now, we state the proposition corresponding to Proposition \ref{Main_Prop_EVE}, 
which plays an important role in Theorem \ref{Main_Thm_EVZE}.

\begin{proposition}	\label{Main_Prop_EVZE}
Let $m \in \ZZ_{\geq 0}$, $\frac{1}{2} < \s < 1$, and $\theta \in \RR$ be fixed. 
There exist positive constants $a_{5} = a_{5}(\s, m)$, $a_{6} = a_{6}(\s, m)$ such that
for large numbers $T, X, V$ with $V \leq a_{5}\frac{(\log{T})^{1-\s}}{(\log{\log{T}})^{m+1}}$ and
$V^{\frac{4\s}{1-\s}} \leq X \leq T^{a_{6} /  V^{\frac{1}{1-\s}} (\log{V})^{\frac{m+\s}{1-\s}}}$,
we have
\begin{align*}
&\frac{1}{T}\meas\set{t \in [T, 2T]}{\Re\bigg( e^{-i\theta} \sum_{p \leq X}\frac{1}{p^{\s+it}(\log{p})^{m}}\bigg) > V}\\
&= \exp\l( - A_{m}(\s) V^{\frac{1}{1-\s}}(\log{V})^{\frac{m+\s}{1 - \s}}
\l( 1 + O_{\s, m}\l(\sqrt{\frac{1 + m\log{\log{V}}}{\log{V}}}\r)\r) \r).
\end{align*}
\end{proposition}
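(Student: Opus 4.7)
The plan is to follow the Laplace-transform / moment-generating-function method used by Hattori--Matsumoto~\cite{HM1999} and Lamzouri~\cite{L2011}, carefully tracking the additional factor $(\log p)^{-m}$ throughout. Set
\[
P_{\theta}(t) := \Re\Bigl(e^{-i\theta}\sum_{p\le X}\frac{1}{p^{\sigma+it}(\log p)^{m}}\Bigr).
\]
The first step will be to compute, for real $\lambda>0$ in a suitable range, the exponential moment
\[
M(\lambda)=\frac{1}{T}\int_{T}^{2T}e^{\lambda P_{\theta}(t)}\,dt.
\]
Writing $\exp(\lambda P_{\theta}(t))$ as a product of exponentials over $p\le X$, Taylor expanding each factor, and integrating termwise, the orthogonality relation $\int_{T}^{2T}p^{-it}q^{it}\,dt=T\delta_{p=q}+O(1/|\log(p/q)|)$ and its multi-prime analogue reduce $M(\lambda)$ to the random-model quantity
\[
M_{\mathrm{rand}}(\lambda)=\prod_{p\le X}I_{0}\!\left(\frac{\lambda}{p^{\sigma}(\log p)^{m}}\right),
\]
with an error governed by the contribution of tuples whose prime product exceeds $T$. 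This is the origin of the upper bound on $X$ in the hypothesis.

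The second step will be the asymptotic analysis of $\log M_{\mathrm{rand}}(\lambda)$. Convert the sum over primes to an integral via the prime number theorem and substitute $u=\lambda/(x^{\sigma}(\log x)^{m})$, noting that $\log x\sim(\log\lambda)/\sigma$ in the dominant range. The constant $G(\sigma)=\int_{0}^{\infty}\log I_{0}(u)u^{-1-1/\sigma}\,du$ arises directly from this substitution, and one obtains a leading-order asymptotic of the form
\[
\log M_{\mathrm{rand}}(\lambda)\sim c(\sigma,m)\,G(\sigma)\,\frac{\lambda^{1/\sigma}}{(\log\lambda)^{(m+\sigma)/\sigma}}
\]
for an explicit constant $c(\sigma,m)$. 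Solving the saddle-point equation $V=\partial_{\lambda}\log M_{\mathrm{rand}}(\lambda)$ yields $\lambda^{\ast}\asymp V^{\sigma/(1-\sigma)}(\log V)^{(m+\sigma)/(1-\sigma)}$, and the Legendre transform
\[
\lambda^{\ast}V-\log M_{\mathrm{rand}}(\lambda^{\ast})=\frac{1-\sigma}{\sigma}\log M_{\mathrm{rand}}(\lambda^{\ast})
\]
works out, after bookkeeping of the explicit constants, to $A_{m}(\sigma)V^{1/(1-\sigma)}(\log V)^{(m+\sigma)/(1-\sigma)}(1+o(1))$ with $A_{m}(\sigma)$ as in \eqref{def_A_m}. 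The upper bound on $\meas\{P_{\theta}>V\}/T$ is then immediate from Markov's inequality applied at $\lambda=\lambda^{\ast}$.

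For the matching lower bound, the plan is to introduce the tilted measure $d\mu(t)=M(\lambda^{\ast})^{-1}e^{\lambda^{\ast}P_{\theta}(t)}\,dt/T$ on $[T,2T]$ and to verify, via a second-moment estimate for $\int_{T}^{2T}P_{\theta}(t)^{2}e^{\lambda^{\ast}P_{\theta}(t)}\,dt$ (again computed by the orthogonality relation), that under $\mu$ the quantity $P_{\theta}(t)$ concentrates around its tilted mean $V$ in a window of width of order $\sqrt{\partial_{\lambda}^{2}\log M_{\mathrm{rand}}(\lambda^{\ast})}$. A Paley--Zygmund-type inequality then produces $\meas\{V\le P_{\theta}\le V+\Delta\}\gg T\,M(\lambda^{\ast})e^{-\lambda^{\ast}(V+\Delta)}$, matching the upper bound once $\Delta$ is chosen appropriately.

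The hard part will be obtaining the precise error term $O(\sqrt{(1+m\log\log V)/\log V})$, which requires simultaneous control of three error sources: (i) the off-diagonal contribution to $M(\lambda)$, which fixes the allowed upper bound on $X$; (ii) the prime-number-theorem error in the integral approximation for $\log M_{\mathrm{rand}}(\lambda)$, where the slowly-varying factor $(\log x)^{-m}$ is what generates the $\log\log V$ in the final error; and (iii) the width of the tilted concentration window, which must be small enough not to perturb the rate function at leading order but large enough to make the second-moment argument effective. The hypothesis $V\le a_{5}(\log T)^{1-\sigma}/(\log\log T)^{m+1}$ corresponds exactly to the range in which $\lambda^{\ast}$ remains small enough that these three error terms combine to give the stated bound.
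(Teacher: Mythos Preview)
Your overall architecture (moment generating function, saddle point, matching upper and lower bounds) is the same as the paper's, and the asymptotic analysis of $\log M_{\mathrm{rand}}(\lambda)$ via the substitution $u=\lambda/(x^{\sigma}(\log x)^{m})$ is exactly Lemma~\ref{PB0EG}. However, the first step has a genuine gap.

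You propose to compute $M(\lambda)=\frac{1}{T}\int_{T}^{2T}e^{\lambda P_{\theta}(t)}\,dt$ by Taylor expanding and integrating termwise, claiming the error is ``governed by tuples whose prime product exceeds $T$''. But the orthogonality relation (Lemma~\ref{RCLG}) gives, for each multi-index of total degree $k$, an error $O(n/T)$ with $n$ the prime product; summing these over \emph{all} multi-indices yields a bound of order $T^{-1}\prod_{p\le X}\exp\bigl(\lambda p^{1-\sigma}/(\log p)^{m}\bigr)$, which is astronomically large. The error is not concentrated on tuples with $n>T$; it accumulates over the entire infinite expansion. The paper emphasizes precisely this obstruction: because there is no Euler product for $m\ge 1$, Lamzouri's divisor-function bookkeeping is unavailable, and one must instead use Radziwi\l\l's device. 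Concretely, the paper first restricts to the set $\mathcal{A}=\{t\in[T,2T]:|P_{\theta}(t)|\le W\}$ with $W\asymp V$ (Lemma~\ref{ESAE} controls $\meas([T,2T]\setminus\mathcal{A})$), so that on $\mathcal{A}$ the Taylor series for $e^{\lambda P_{\theta}}$ can be truncated at order $Y\asymp \lambda W$ with negligible remainder. The $k$-th moment for each $k\le Y$ is then computed via Lemma~\ref{RKL} and Cauchy's integral formula, and the condition $X^{Y}\le T$ is exactly what produces the upper bound $X\le T^{a_{6}/V^{1/(1-\sigma)}(\log V)^{(m+\sigma)/(1-\sigma)}}$. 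Without the restriction to $\mathcal{A}$ your error control does not close.

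Your lower-bound plan (tilted measure plus a Paley--Zygmund / second-moment argument) is a legitimate alternative, but it is not what the paper does. The paper avoids computing any tilted second moment: it writes $\int_{\mathcal{A}}e^{xP_{\theta}}\,dt=x\int_{-\infty}^{\infty}e^{xv}\meas(\S^{*}(T,v))\,dv$, observes that replacing $x$ by $x(1\pm\varepsilon)$ shows the tails $v<V(1-\varepsilon)$ and $v>V(1+\varepsilon)$ each contribute at most a third of the total, and then uses the monotonicity of $v\mapsto\meas(\S^{*}(T,v))$ to sandwich $\meas(\S^{*}(T,V))$. This extracts both bounds from the single Laplace-transform asymptotic, with the error $\sqrt{(1+m\log\log V)/\log V}$ coming directly from the choice of $\varepsilon$. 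Your approach would require an additional computation of $\frac{1}{T}\int P_{\theta}(t)^{2}e^{\lambda^{*}P_{\theta}(t)}\,dt$, which faces the same truncation issue as the first moment and again needs the restriction to $\mathcal{A}$.
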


As we mentioned above, we can obtain a good estimate of the contribution from zeros, 
and so Theorem \ref{Main_Thm_EVZE} is proved in the same range as Proposition \ref{Main_Prop_EVZE}.

Here, we describe the method of the proofs of Theorem \ref{Main_Thm_EVE} and Theorem \ref{Main_Thm_EVZE} roughly.
These theorems are analogues of Lamzouri's result, but we cannot adopt directly his method.
He used the Euler product of the Riemann zeta-function and the generalized divisor function to estimate a Dirichlet polynomial. 
However, $\et_{m}(s)$ does not have the representation of Euler product when $m \geq 1$, 
and so we cannot apply directly his method. 
To avoid this obstacle the author uses \eqref{MGHKFCm}, and estimates the Dirichlet polynomial by using Radziwi\l\l's 
method \cite{Ra2011}.

\section{\textbf{Preliminaries}}

In this section, we prepare some lemmas.

\begin{lemma}	\label{RCLG}
Let $\theta \in \RR$ be fixed. 
For any $n \in \ZZ_{\geq 2}$, we write $n = q_1^{\a_1} \dots q_{r}^{\a_r}$, where $q_{j}$ are distinct prime numbers.
Then we have
\begin{align}
\frac{1}{T}\int_{T}^{2T}\prod_{j = 1}^{r}\l(\cos(t\log{q_{j}} + \theta)\r)^{\a_j}dt
= f(n) + O\l( \frac{n}{T} \r)
\end{align}
for any $T > 0$. Here, $f$ is the multiplicative function defined by
$
f(p^{\a}) = 
2^{-\a}\begin{pmatrix}
\a\\
\a/2
\end{pmatrix}
$
for a prime power $p^{\a}$,
and we regard that 
$\begin{pmatrix}
\a\\
\a/2
\end{pmatrix} = 0$ 
if $\a$ is odd.
\end{lemma}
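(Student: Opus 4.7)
The plan is to expand each factor $(\cos(t\log q_j + \theta))^{\a_j}$ via Euler's formula and the binomial theorem, evaluate the resulting linear combination of exponentials $e^{itL}$ term-by-term on $[T, 2T]$, and show that only the ``stationary'' frequencies $L=0$ contribute to the main term.

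First I would write
\begin{align*}
(\cos(t\log q_j + \theta))^{\a_j}
= 2^{-\a_j}\sum_{k_j = 0}^{\a_j}\binom{\a_j}{k_j} e^{i(\a_j - 2k_j)(t\log q_j + \theta)},
\end{align*}
and multiply over $j$ to obtain
\begin{align*}
\prod_{j=1}^r (\cos(t\log q_j + \theta))^{\a_j}
= 2^{-\sum_j \a_j} \sum_{\mathbf{k}} \bigg(\prod_{j}\binom{\a_j}{k_j}\bigg) e^{i\theta N(\mathbf{k})} e^{it L(\mathbf{k})},
\end{align*}
with $N(\mathbf{k}) = \sum_j (\a_j - 2k_j)$ and $L(\mathbf{k}) = \sum_j (\a_j - 2k_j) \log q_j$. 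Since the $q_j$ are distinct primes, unique factorization yields $L(\mathbf{k}) = 0$ if and only if $\a_j = 2k_j$ for every $j$; this requires each $\a_j$ to be even, and automatically forces $N(\mathbf{k}) = 0$, so the phase $e^{i\theta N(\mathbf{k})}$ drops out. Collecting these contributions and averaging trivially over $[T,2T]$ yields exactly $f(n)$.

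For the oscillatory terms $L(\mathbf{k}) \neq 0$, I would invoke the standard estimate $\bigl|\int_T^{2T} e^{itL}\,dt\bigr| \leq 2/|L|$. The key lower bound on $|L(\mathbf{k})|$ comes from writing $L(\mathbf{k}) = \log(a/b)$ where $a = \prod_{2k_j < \a_j} q_j^{\a_j - 2k_j}$ and $b = \prod_{2k_j > \a_j} q_j^{2k_j - \a_j}$. Both $a$ and $b$ are distinct positive integers bounded by $n$, so the elementary inequality $|\log(a/b)| \geq |a-b|/\max(a,b)$ gives $|L(\mathbf{k})| \geq 1/n$. Hence each oscillatory integral is $O(n)$.

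Finally, the total oscillatory contribution, divided by $T$, is bounded by $\frac{2n}{T}\cdot 2^{-\sum_j\a_j}\sum_{\mathbf{k}}\prod_j\binom{\a_j}{k_j} = \frac{2n}{T}$, which produces the claimed error $O(n/T)$. The only mildly subtle step is the lower bound $|L(\mathbf{k})| \geq 1/n$; once that is in hand, everything else is routine bookkeeping with binomial expansions.
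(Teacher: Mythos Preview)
Your proposal is correct and follows essentially the same approach as the paper: both expand each cosine power via Euler's formula, identify the main term $f(n)$ from the zero-frequency contributions, and bound the oscillatory terms via the lower bound $|L(\mathbf{k})| \geq 1/n$ coming from unique factorization. Your parametrization by binomial indices $k_j$ is equivalent to the paper's sum over sign tuples $(\epsilon_1,\dots,\epsilon_{\alpha_j})\in\{-1,1\}^{\alpha_j}$, and your explicit justification of $|L(\mathbf{k})|\geq 1/n$ via $L(\mathbf{k})=\log(a/b)$ with $a,b\leq n$ distinct is exactly what the paper records as $|\beta_1\log q_1+\cdots+\beta_r\log q_r|\gg n^{-1}$.
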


\begin{proof}
We find that
\begin{align*}
&(\cos(t\log{q_{j}} + \theta))^{\a_{j}}\\
&= \frac{1}{2^{\a_{j}}}\l( e^{i(t\log{q_{j}} + \theta)} + e^{-i(t\log{q_{j}} + \theta)} \r)^{\a_{j}}
= \frac{1}{2^{\a_{j}}}\sum_{\e_{1}, \dots, \e_{\a_{j}} \in \{-1, 1 \}}e^{i(\e_1 + \cdots + \e_{\a_{j}})(t \log{q_{j}} + \theta)}\\
&= \frac{1}{2^{\a_{j}}}
\begin{pmatrix}
\a_{j}\\
\a_{j}/2
\end{pmatrix}
+\frac{1}{2^{\a_{j}}}\sum_{\substack{\e_{1}, \dots, \e_{\a_{j}} \in \{ -1, 1 \} \\ \e_{1} + \cdots + \e_{\a_{j}} \not= 0}}
e^{i(\e_1 + \cdots + \e_{\a_{j}})(t \log{q_{j}} + \theta)}.
\end{align*}
Therefore, we obtain
\begin{align}	\label{RCLG1}
\prod_{j = 1}^{r}\l(\cos(t\log{q_{j} + \theta})\r)^{\a_{j}}
= f(n) + E,
\end{align}
where $E$ is the sum whose the number of terms is less than $2^{\Omega(n)}$, 
and the form of each term is $\delta e^{it(\b_1\log{q_{1}} + \cdots + \b_{r}\log{q_{r}})}$. 
Here, $\delta$ is a complex number independent of $t$ satisfying $|\delta| \leq 2^{-\Omega(n)}$, 
and $\b_{j}$'s are integers with $0 \leq |\b_{j}| \leq \a_j$ and $\b_{s} \not= 0$ for some $1 \leq s \leq r$.
Since $|\b_{1}\log{q_{1}} + \cdots \b_{r}\log{q_{r}}| \gg n^{-1}$, 
the integral of each term of $E$ is estimated by $\ll n 2^{-\Omega(n)}$. 
As the number of such terms $\ll 2^{\Omega(n)}$, we have $\int_{T}^{2T}E dt \ll n$. 
Thus, by this estimate and equation \eqref{RCLG1}, we obtain this lemma.
\end{proof}

\begin{lemma}	\label{RKL}
Let $m \in \ZZ_{\geq 0}$, $\frac{1}{2} \leq \s < 1$ be fixed.
Let $X \geq 3$, and $T$ be large. 
Then, for any positive integer $k$, we have
\begin{align*}
&\frac{1}{T}\int_{T}^{2T}\l( \Re\l(e^{-i\theta}\sum_{p \leq X}\frac{1}{p^{\s+it}(\log{p})^{m}} \r) \r)^{k}dt\\
&= \frac{k!}{2\pi i}\oint_{|w| = R}\frac{1}{w^{k+1}}\prod_{p \leq X}I_{0}\l( \frac{w}{p^{\s}(\log{p})^{m}} \r)dw
+O\l(\frac{X^{2k}}{T} \r).
\end{align*}
Here, $R$ is any positive number, 
and $I_{0}$ is the modified $0$-th order Bessel function.
\end{lemma}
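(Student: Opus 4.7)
The plan is to expand the $k$-th power of the real part by the multinomial theorem, integrate each term using Lemma \ref{RCLG}, and then recognize the resulting multiple series as the Cauchy integral extracting the $w^{k}$-coefficient of $\prod_{p \leq X} I_{0}(w a_{p})$, where $a_{p} := p^{-\s}(\log p)^{-m}$.

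First I would rewrite the real part as $\sum_{p \leq X} a_{p} \cos(t \log p + \theta)$ and apply the multinomial expansion, obtaining
\begin{align*}
\Bigl(\sum_{p \leq X} a_{p} \cos(t\log p+\theta)\Bigr)^{k}
= \sum_{\substack{(\a_{p})_{p \leq X}\\ \sum_{p}\a_{p}=k}} \frac{k!}{\prod_{p}\a_{p}!} \prod_{p} a_{p}^{\a_{p}} \prod_{p} \bigl(\cos(t\log p+\theta)\bigr)^{\a_{p}}.
\end{align*}
For each tuple $(\a_{p})$, setting $n = \prod_{p} p^{\a_{p}}$ and applying Lemma \ref{RCLG} gives the main contribution $f(n) = \prod_{p} f(p^{\a_{p}})$ plus an error $O(n/T)$. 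After summation, the main term becomes $\sum_{\sum \a_{p}=k} \frac{k!}{\prod \a_{p}!} \prod_{p} f(p^{\a_{p}}) a_{p}^{\a_{p}}$.

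The next step is to match this main term to the stated contour integral. Using the series $I_{0}(z) = \sum_{n \geq 0} (z/2)^{2n}/(n!)^{2}$, the coefficient of $z^{\a}$ in $I_{0}(z)$ equals $1/(2^{\a}((\a/2)!)^{2})$ for even $\a$ (and vanishes otherwise), which equals $f(p^{\a})/\a!$. Thus $\prod_{p \leq X} I_{0}(w a_{p}) = \sum_{(\a_{p})} \prod_{p} \frac{f(p^{\a_{p}}) a_{p}^{\a_{p}}}{\a_{p}!} w^{\sum \a_{p}}$, and Cauchy's integral formula around any circle $|w| = R$ extracts $k!$ times the $w^{k}$-coefficient; this is legitimate for any $R > 0$ since $I_{0}$ is entire. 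This gives the main term exactly.

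Finally, I would control the accumulated error. Summing the remainders bounds the total error by
\begin{align*}
\frac{1}{T}\sum_{\sum \a_{p}=k} \frac{k!}{\prod \a_{p}!} \prod_{p}(a_{p} p)^{\a_{p}} = \frac{1}{T}\Bigl(\sum_{p \leq X} a_{p} p\Bigr)^{k},
\end{align*}
and since $a_{p} \ll_{m} 1$ and $\sum_{p \leq X} p \ll X^{2}$, this is $\ll X^{2k}/T$, matching the stated error. No step looks genuinely hard; the only subtlety is the combinatorial identification of the $w^{k}$-coefficient of the Bessel product, which is direct once one notes $f(p^{\a})/\a! = 1/(2^{\a}((\a/2)!)^{2})$.
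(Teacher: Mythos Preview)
Your proposal is correct and follows essentially the same approach as the paper: expand the $k$-th power, apply Lemma~\ref{RCLG} termwise, and identify the resulting main term as $k!$ times the $w^{k}$-coefficient of $\prod_{p\leq X} I_{0}(w a_{p})$ via Cauchy's formula. The paper organizes the intermediate sum through the auxiliary multiplicative function $g_{X}(p^{\a}) = 1/(\a!(\log p)^{\a m})$ and the Dirichlet-type generating series $\sum_{n} f(n) g_{X}(n) n^{-\s} w^{\Omega(n)}$ before factoring into the Bessel product, whereas you match coefficients directly from the multinomial expansion; this is a cosmetic difference only.
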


\begin{proof}
Define the multiplicative function $g_{X}(n)$ as, for every prime number $p$ and $\a \in \ZZ_{\geq 1}$, 
$g_{X}(p^{\a}) = 1 / \a!(\log{p})^{\a m}$ if $p \leq X$, and $g_{X}(p^{\a}) = 0$ otherwise.
By Lemma \ref{RCLG}, we find that
\begin{align*}
&\frac{1}{T}\int_{T}^{2T}\l( \Re\l(e^{-i\theta}\sum_{p \leq X}\frac{1}{p^{\s+it}(\log{p})^{m}} \r) \r)^{k}dt\\
&= \frac{1}{T}\sum_{p_1, \dots, p_k \leq X}\frac{\int_{T}^{2T}\cos(t \log{p_1}+\theta) \cdots \cos(t \log{p_k}+\theta)dt}
{(p_1 \cdots p_{k})^{\s} (\log{p_1} \cdots \log{p_k})^{m}}\\
&= \sum_{p_1, \dots, p_k \leq X}\frac{f(p_1 \cdots p_k)}{(p_1 \cdots p_k)^{\s}(\log{p_1} \cdots \log{p_k})^{m}} 
+ O\l( \frac{X^{2k}}{T} \r).
\end{align*}
From this equation and the definition of $g_{X}$, 
we have
\begin{align}	\label{RKLE1}
\frac{1}{T}\int_{T}^{2T}\l(\Re\sum_{p \leq X}\frac{1}{p^{\s+it}(\log{p})^{m}} \r)^{k}dt
= k! \sum_{\Omega(n) = k}\frac{f(n)}{n^{\s}}g_{X}(n) + O\l( \frac{X^{2k}}{T} \r).
\end{align}
By Cauchy's integral formula, the above is equal to
\begin{align*}
\frac{k!}{2\pi i}\oint_{|w| = R}\sum_{n = 1}^{\infty}\frac{f(n)}{n^{\s}}g_{X}(n)w^{\Omega(n)}\frac{dw}{w^{k+1}}
+O\l( \frac{X^{2k}}{T} \r).
\end{align*}
Since the functions $f$, $g_{X}$, and $w^{\Omega(n)}$ are multiplicative, this main term is 
\begin{align*}
&= \frac{k!}{2\pi i}\oint_{|w| = R}\frac{1}{w^{k+1}}\prod_{p \leq X}
\l(\sum_{l=0}^{\infty}\l( \frac{(w / 2p^{\s} (\log{p})^{m})^{2l}}{(l!)^2} \r) \r)dw\\
&= \frac{k!}{2\pi i}\oint_{|w| = R}\frac{1}{w^{k+1}}\prod_{p \leq X}I_{0}\l( \frac{w}{p^{\s}(\log{p})^{m}} \r)dw,
\end{align*}
which completes the proof of this lemma.
\end{proof}

\begin{lemma}	\label{PB0E}
Let $m$ be a fixed positive interger.
For $x \geq 3$, $X \geq x^{3}$, we have
\begin{align}	\label{PB0E1}
&\prod_{p \leq X}I_{0}\l( \frac{x}{\sqrt{p}(\log{p})^{m}} \r)\\
&= \exp\l( \frac{x^2}{8m(2\log{x})^{2m}}\l( 1 - \l(\frac{\log{x^2}}{\log{X}}\r)^{2m} 
+ O\l( \frac{\log{\log{x}}}{\log{x}} \r) \r) \r).
\end{align}
\end{lemma}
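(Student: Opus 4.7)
The plan is to take logarithms and analyze $\Sigma := \sum_{p \leq X}\log I_{0}(u_{p})$, where $u_{p} := x/(\sqrt{p}(\log p)^{m})$. Two analytic facts about $I_{0}$ drive everything. First, from the Taylor series $I_{0}(z) = \sum_{n \geq 0}(z/2)^{2n}/(n!)^{2}$ we have $\log I_{0}(z) = z^{2}/4 + O(z^{4})$ for $0 \leq z \leq 1$. Second, from $I_{0}(z) = (2\pi)^{-1}\int_{-\pi}^{\pi}e^{z\cos\theta}\, d\theta \leq e^{z}$ we have $\log I_{0}(z) \leq z$ for all $z \geq 0$. Since $u_{p}$ crosses the value $1$ at $p \asymp x^{2}/(\log x)^{2m}$, I set $y_{0} := x^{2}/(\log x)^{2m}$ as the splitting point.

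I then split $\Sigma = \Sigma_{A} + \Sigma_{B}$, where $\Sigma_{A}$ is the sum over $p \leq y_{0}$ and $\Sigma_{B}$ the sum over $y_{0} < p \leq X$. The hypothesis $X \geq x^{3}$ guarantees $y_{0} \leq x^{2} \leq X^{2/3} < X$, so Region B is nonempty. For $p > y_{0}$ we have $u_{p} < 1$ (after possibly adjusting $y_{0}$ by an absolute constant), so inserting the Taylor expansion gives
\[
\Sigma_{B} = \frac{x^{2}}{4}\sum_{y_{0} < p \leq X}\frac{1}{p(\log p)^{2m}} + O\biggl(x^{4}\sum_{p > y_{0}}\frac{1}{p^{2}(\log p)^{4m}}\biggr).
\]
Partial summation against the prime number theorem evaluates the main sum to $(2m)^{-1}\bigl((\log y_{0})^{-2m} - (\log X)^{-2m}\bigr)$ plus lower-order terms, and bounds the error sum by $O(x^{2}/(\log x)^{2m+1})$. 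Writing $\log y_{0} = 2\log x + O(\log\log x)$ gives $(\log y_{0})^{-2m} = (2\log x)^{-2m}\bigl(1 + O(\log\log x/\log x)\bigr)$, while $(\log X)^{-2m} = (2\log x)^{-2m}(\log x^{2}/\log X)^{2m}$ is an exact identity. Together these produce the stated main term.

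For Region A, the bound $\log I_{0}(u_{p}) \leq u_{p}$ combined with the routine partial-summation estimate $\sum_{p \leq N}(\sqrt{p}(\log p)^{m})^{-1} \ll \sqrt{N}/(\log N)^{m+1}$ yields $\Sigma_{A} \ll x\sqrt{y_{0}}/(\log y_{0})^{m+1} = O(x^{2}/(\log x)^{2m+1})$, matching the Taylor-error size and thus absorbed into the $O(\log\log x/\log x)$ error. The trivial bound $\Sigma_{A} \geq 0$ suffices for the lower direction. Exponentiating the combined estimate then gives the lemma. The only real technical work is keeping the prime-counting error terms sufficiently tight; the main conceptual point is that choosing $y_{0}$ at the natural transition $u_{p} \asymp 1$ makes both the large-$u_{p}$ portion of $\Sigma$ and the Taylor tail of the small-$u_{p}$ portion smaller than the main term by a factor $1/\log x$, so they collapse into the final error.
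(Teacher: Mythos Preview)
Your proposal is correct and follows essentially the same route as the paper: split at $y_{0}=x^{2}/(\log x)^{2m}$, use $\log I_{0}(z)=z^{2}/4+O(z^{4})$ together with the prime number theorem on the range $y_{0}<p\le X$ to produce the main term, and bound the range $p\le y_{0}$ via $I_{0}(z)\le e^{z}$ and $\sum_{p\le y_{0}}p^{-1/2}(\log p)^{-m}\ll y_{0}^{1/2}(\log y_{0})^{-m-1}$. The only difference is that you spell out the partial-summation computation for $\sum_{y_{0}<p\le X}p^{-1}(\log p)^{-2m}$ a bit more explicitly than the paper does.
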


\begin{proof}
By the Taylor expansion of $I_{0}$ and the prime number theorem, we find that
\begin{align}
&\prod_{\frac{x^{2}}{(\log{x})^{2m}} < p \leq X}I_{0}\l( \frac{x}{\sqrt{p}(\log{p})^{m}} \r)\\
&= \exp\l( \sum_{\frac{x^{2}}{(\log{x})^{2m}} < p \leq X} \l(\frac{x^2}{4p (\log{p})^{2m}} 
+ O_{m}\l( \frac{x^4}{p^2 (\log{p})^{4m}} \r) \r) \r)\\ \label{PB0E2}
&= \exp\l( \frac{x^2}{8m(2\log{x})^{2m}}\l( 1 - \l(\frac{\log{x^2}}{\log{X}}\r)^{2m}
+ O_{m}\l( \frac{\log{\log{x}}}{\log{x}} \r) \r) \r).
\end{align}
On the other hand, by using the inequality $I_{0}(x) \leq \exp(x)$ and the prime number theorem, 
it holds that
\begin{align*}
\prod_{p \leq \frac{x^{2}}{(\log{x})^{2m}}}I_{0}\l( \frac{x}{\sqrt{p}(\log{p})^{m}} \r)
&\leq \exp\l( x \sum_{p \leq \frac{x^2}{(\log{x})^{2m}}}\frac{1}{\sqrt{p}(\log{p})^{m}} \r)\\
&\leq \exp\l( O_{m}\l( \frac{x^2}{(\log{x})^{2m+1}} \r) \r).
\end{align*}
From this estimate and equation \eqref{PB0E2}, we obtain this lemma.
\end{proof}

\begin{lemma}	\label{PB0EG}
Let $\frac{1}{2} < \s < 1$, $m \in \ZZ_{\geq 0}$ be fixed.
Then, for large $x$, $X \geq x^{3}$, we have
\begin{align*}	
\prod_{p \leq X}I_{0}\l( \frac{x}{p^{\s}(\log{p})^{m}} \r)
=  \exp\l(\frac{\s^{\frac{m}{\s}}G(\s)x^{\frac{1}{\s}}}{(\log{x})^{\frac{m}{\s}+1}}
\l( 1 + O\l(\frac{1 + m\log{\log{x}}}{\log{x}} \r) \r) \r).
\end{align*}
\end{lemma}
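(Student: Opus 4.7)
The plan is to mirror the proof of Lemma \ref{PB0E}, but since $\log I_{0}(u)$ is no longer captured by its first Taylor coefficient throughout the effective range of summation, the main term must be extracted from the convergent integral defining $G(\s)$ rather than from a single Mertens-type sum. Taking logarithms, I write
\[
\log\prod_{p\le X}I_{0}\l(\f{x}{p^{\s}(\log p)^{m}}\r)=\sum_{p\le X}\log I_{0}\l(\f{x}{p^{\s}(\log p)^{m}}\r)
\]
and convert this sum into an integral against $dt/\log t$ via the prime number theorem with the classical zero-free-region error; since the integrand and its first $t$-derivative are well-controlled, the PNT error is negligible compared with the expected main term of size $x^{1/\s}/(\log x)^{1+m/\s}$.

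The core computation is the substitution $u=x/(t^{\s}(\log t)^{m})$. Taking logarithms in the defining relation gives $\s\log t+m\log\log t=\log(x/u)$, from which
\[
\log t=\s^{-1}\log(x/u)\bigl(1+O((1+m\log\log x)/\log x)\bigr)
\]
uniformly for $t$ in the summation range. A direct calculation then yields
\[
\f{dt}{\log t}=-\f{\s^{m/\s}x^{1/\s}}{u^{1+1/\s}(\log(x/u))^{1+m/\s}}\bigl(1+O((1+m\log\log x)/\log x)\bigr)\,du,
\]
and, after replacing $\log(x/u)$ by $\log x$ (introducing an error of the same quality, since $u$ is bounded by a polynomial in $x$), the integral reduces to
\[
\f{\s^{m/\s}x^{1/\s}}{(\log x)^{1+m/\s}}\int_{u_{\min}}^{u_{\max}}\log I_{0}(u)\,u^{-1-1/\s}\,du\cdot\bigl(1+O((1+m\log\log x)/\log x)\bigr),
\]
with $u_{\min}\ll x^{1-3\s}/(\log x)^{m}$ (from $p=X\ge x^{3}$) and $u_{\max}\asymp x$ (from $p=2$).

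To finish I would extend the range of integration to $(0,\infty)$ so as to produce $G(\s)$; the discarded tails are negligible, since $\log I_{0}(u)\ll u^{2}$ near $0$ gives $\int_{0}^{u_{\min}}\log I_{0}(u)\,u^{-1-1/\s}\,du\ll u_{\min}^{2-1/\s}$, convergent because $\s>1/2$, while $\log I_{0}(u)\le u$ at infinity gives $\int_{u_{\max}}^{\infty}\log I_{0}(u)\,u^{-1-1/\s}\,du\ll u_{\max}^{1-1/\s}$, convergent because $\s<1$. Both contributions, after multiplication by the prefactor, are of smaller order than the stated error. Exponentiating then yields the claimed formula.

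The main obstacle is keeping the substitution error $O((1+m\log\log x)/\log x)$ uniform across the entire transformed interval. In particular, the region where $u$ is close to $u_{\max}\asymp x$, i.e.\ where $\log(x/u)=O(1)$, cannot be handled by the naive asymptotic expansion of $\log t$ in terms of $\log(x/u)$; I would either truncate the integral slightly below this endpoint and absorb the remainder through the trivial bound $\log I_{0}(u)\le u$, or observe that this boundary region contributes only $O(x^{1-1/\s})$ relative to the main term and hence can be neglected.
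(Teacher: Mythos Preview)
Your proposal is correct and follows essentially the same route as the paper: converting the prime sum to an integral via the prime number theorem, performing the substitution $u=x/(t^{\s}(\log t)^{m})$, and identifying the resulting integral with $G(\s)$. The only organisational difference is that the paper pre-truncates the sum to the range $y_{0}<p\le y_{1}$ (defined by $y_{j}^{\s}(\log y_{j})^{m}=x^{(2j+1)/2}$), so that after the change of variables $u$ lies in $[x^{-1/2},x^{1/2}]$ and the expansions $\log t=\s^{-1}\log(x/u)(1+O(\cdots))$ and $(\log(x/u))^{-1-m/\s}=(\log x)^{-1-m/\s}(1+O(|\log u|/\log x))$ are automatically uniform; the tails $p\le y_{0}$ and $y_{1}<p\le X$ are disposed of beforehand by the trivial bound $\log I_{0}\le\mathrm{id}$ and the Taylor bound $\log I_{0}(u)\ll u^{2}$ respectively. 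This is precisely the fix you sketch in your final paragraph, so once you carry out that truncation your argument coincides with the paper's.
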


\begin{proof}
We take the numbers $y_0$, $y_1$ as satisfying the equations 
$y_{0}^{\s}(\log{y_{0}})^{m} = x^{1/2}$, $y_{1}^{\s}(\log{y_{1}})^{m} = x^{3/2}$, respectively.
Then, it holds that $y_{0} \asymp_{m} x^{\frac{1}{2\s}}(\log{x})^{-\frac{m}{\s}}$, 
$y_{1} \asymp_{m} x^{\frac{3}{2\s}}(\log{x})^{-\frac{m}{\s}}$, and the estimate $X \gg y_1$ also holds.
By the Taylor expansion of $I_{0}$ and the prime number theorem, we find that
\begin{align*}
\sum_{p \leq X}\log{I_{0}\l( \frac{x}{p^{\s}(\log{p})^{m}} \r)}
= \sum_{p \leq y_1}\log{I_{0}\l( \frac{x}{p^{\s}(\log{p})^{m}} \r)}
+ O_{m, \s}\l(\frac{x^{\frac{3-2\s}{2\s}}}{(\log{x})^{\frac{m}{\s} + 1}} \r).
\end{align*}
By the inequality $I_{0}(x) \leq \exp(x)$, it holds that
\begin{align*}
\sum_{p \leq y_0}\log{I_{0}\l( \frac{x}{p^{\s}(\log{p})^{m}} \r)}
\leq \sum_{p \leq y_{0}}\frac{x}{p^{\s}(\log{p})^{m}}
\ll_{m, \s} \frac{x^{\frac{1-\s}{2\s}}}{(\log{x})^{\frac{m}{\s} + 1}}.
\end{align*}
From these estimates, one has
\begin{align}	\label{PB0EG4}
&\sum_{p \leq X}\log{I_{0}\l( \frac{x}{p^{\s}(\log{p})^{m}} \r)}\\
&= \sum_{y_0 < p \leq y_{1}}\log{I_{0}\l( \frac{x}{p^{\s}(\log{p})^{m}} \r)}
+O_{m, \s}\l( \frac{1}{(\log{x})^{\frac{m}{\s} + 1}}
\l( x^{\frac{3 - 2\s}{2\s}} + x^{\frac{1+\s}{2\s}} \r) \r).
\end{align}

By using partial summation and estimates of $I_{0}$, we obtain
\begin{align}	\label{PB0EG3}
&\sum_{y_0 < p \leq y_1}\log{I_{0}\l( \frac{x}{p^{\s}(\log{p})^{m}} \r)}\\
&= - \int_{y_{0}+}^{y_{1}+}\pi(\xi)\l(\frac{d}{d\xi}\log{I_{0}\l( \frac{x}{\xi^{\s}(\log{\xi})^{m}} \r)} \r)d\xi
+O_{m}\l( \frac{x^{\frac{1+\s}{2\s}} + x^{\frac{3-2\s}{2\s}}}{(\log{x})^{\frac{m}{\s} + 1}} \r).
\end{align}
Applying the basic formula $\pi(\xi) = \int_{2}^{\xi}\frac{du}{\log{u}} + O(\xi e^{-c\sqrt{\log{\xi}}})$, 
we find that the first term on the right hand side is equal to
\begin{align}	\label{PB0EG2}
\int_{y_0}^{y_1}\frac{\log{I_{0}\l( \frac{x}{\xi^{\s}(\log{\xi})^{m}} \r)}}{\log{\xi}}d\xi
+ O\l( \int_{y_0}^{y_1}e^{-c\sqrt{\log{\xi}}}\log{I_{0}\l( \frac{x}{\xi^{\s}(\log{\xi})^{m}} \r)}d\xi \r).
\end{align}
Note that we used the monotonicity of $I_{0}$ in the above deforming.
By the estimate $I_{0}(x) \leq \exp(x)$ and the Taylor expansion of $I_{0}(z)$, we find that
\begin{align*}
&\int_{y_0}^{y_1}e^{-c\sqrt{\log{\xi}}}\log{I_{0}\l( \frac{x}{\xi^{\s}(\log{\xi})^{m}} \r)}d\xi\\
&\ll_{m} x\int_{y_0}^{\frac{x^{1/\s}}{(\log{x})^{m/\s}}}\frac{d\xi}{\xi^{\s}(\log{\xi})^{2m+3}}
+ x^{2}\int_{\frac{x^{1/\s}}{(\log{x})^{m/\s}}}^{\infty}\frac{d\xi}{\xi^{2\s}(\log{\xi})^{2m+3}}\\&
\ll \frac{x^{\frac{1}{\s}}}{(\log{x})^{\frac{m}{\s}+2}}.
\end{align*}
Finally, we consider the first term of \eqref{PB0EG2}.
By making the change of variables $u = \frac{x}{\xi^{\s}(\log{\xi})^{m}}$, 
hard but not difficult calculations can lead that the first term of \eqref{PB0EG2} is equal to
\begin{align*}
&\s^{m/\s}x^{1/\s}\int_{x^{-1/2}}^{x^{1/2}}\frac{(1 + O_{m}(\frac{m\log{\log{x}}}{\log{x}}))
\log{I_{0}(u)}}{u^{1+\frac{1}{\s}}(\log{(x/u)})^{\frac{m}{\s}+1}}du\\
&= \s^{m/\s}x^{1/\s}\int_{x^{-1/2}}^{x^{1/2}}\frac{\log{I_{0}(u)}}{u^{1+\frac{1}{\s}}(\log{(x/u)})^{\frac{m}{\s}+1}}du
+ O_{m, \s}\l(\frac{mx^{1/\s}\log{\log{x}}}{(\log{x})^{\frac{m}{\s}+2}}\r).
\end{align*}
Since $\frac{1}{(\log{(x/u)})^{m/\s + 1}} = \frac{1 + O_{m}( |\log{u}|/\log{x} )}{(\log{x})^{m/\s + 1}} $
for $x^{-1/2} \leq u \leq x^{1/2}$, we find that
\begin{align*}
&\int_{x^{-1/2}}^{x^{1/2}}\frac{\log{I_{0}(u)}}{u^{1+\frac{1}{\s}}(\log{(x/u)})^{\frac{m}{\s}+1}}du\\
&= \frac{1}{(\log{x})^{\frac{m}{\s}+1}}
\int_{x^{-1/2}}^{x^{1/2}}\frac{\log{I_{0}(u)}}{u^{1+\frac{1}{\s}}}du
+O_{m}\l( \frac{1}{(\log{x})^{\frac{m}{\s}+2}}\int_{x^{-1/2}}^{x^{1/2}}\frac{\log{I_{0}(u)}|\log{u}|}{u^{1 + \frac{1}{\s}}}du \r).
\end{align*}
Moreover, by $I_{0}(x) \leq \exp(x)$ and the Taylor expansion of $I_0$, it holds that
\begin{align*}
\int_{x^{-1/2}}^{x^{1/2}}\frac{\log{I_{0}(u)}}{u^{1+\frac{1}{\s}}}du
= \int_{0}^{\infty}\frac{\log{I_{0}(u)}}{u^{1+\frac{1}{\s}}}du
+O_{\s}\l( x^{\frac{1 - 2\s}{2\s}} + x^{\frac{\s - 1}{2\s}} \r),
\end{align*}
and that
\begin{align*}
\int_{x^{-1/2}}^{x^{1/2}}\frac{\log{I_{0}(u)|\log{u}|}}{u^{1+\frac{1}{\s}}}du
\ll_{\s} 1
\end{align*}
for $\frac{1}{2} < \s < 1$.
From the above calculations,  
equation \eqref{PB0EG3} is
\begin{align*}
= \frac{\s^{\frac{m}{\s}}G(\s)x^{\frac{1}{\s}}}{(\log{x})^{\frac{m}{\s}+1}}
\l( 1 + O\l(\frac{1 + m\log{\log{x}}}{\log{x}}\r) \r).
\end{align*}
Hence, by estimates \eqref{PB0EG4}, \eqref{PB0EG3}, \eqref{PB0EG2}, we obtain this lemma.
\end{proof}

\begin{lemma}	\label{SLL}
Let $T$ be large, and let $3 \leq X \leq T$. Let $k$ be a positive integer such that $X^{k} \leq T / \log{T}$.
For any complex numbers $a(p)$ we have
\begin{align*}
\int_{T}^{2T}\bigg| \sum_{p \leq X}\frac{a(p)}{p^{1/2+it}} \bigg|^{2k}dt
\ll T k! \l( \sum_{p \leq X}\frac{|a(p)|^2}{p} \r)^{k}.
\end{align*}
\end{lemma}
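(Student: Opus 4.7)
The plan is to raise the prime Dirichlet polynomial to the $k$-th power, reducing the $2k$-th moment to the second moment of a short Dirichlet polynomial of length at most $X^{k}$, which can then be evaluated by the classical mean-value theorem.

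First, I would write
\begin{align*}
\l( \sum_{p \leq X}\frac{a(p)}{p^{1/2+it}} \r)^{k}
= \sum_{n}\frac{b_{k}(n)}{n^{1/2+it}},
\end{align*}
where $b_{k}(n) = \sum_{p_{1}\cdots p_{k} = n,\ p_{j} \leq X}a(p_{1})\cdots a(p_{k})$ is supported on integers $n \leq X^{k}$ with $\Omega(n) = k$. Explicitly, if $n = \prod_{p}p^{\alpha_{p}}$ with $\sum_{p}\alpha_{p} = k$, then $b_{k}(n) = \frac{k!}{\prod_{p}\alpha_{p}!}\prod_{p}a(p)^{\alpha_{p}}$.

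Next, applying the standard Montgomery--Vaughan mean-value theorem (equivalently, a direct computation using the lower bound $|\log{(m/n)}| \gg |m-n|/\max(m,n)$ for distinct positive integers) gives
\begin{align*}
\int_{T}^{2T}\bigg| \sum_{p \leq X}\frac{a(p)}{p^{1/2+it}} \bigg|^{2k}dt
= \sum_{n}\frac{|b_{k}(n)|^{2}}{n}\l( T + O(n) \r)
\ll T\sum_{n}\frac{|b_{k}(n)|^{2}}{n},
\end{align*}
where the error term is absorbed into the main term using the hypothesis $n \leq X^{k} \leq T/\log{T}$.

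Finally, expanding the right-hand side according to the exponent vector $(\alpha_{p})$ of $n$ and using the trivial inequality $\frac{k!}{\prod_{p}\alpha_{p}!} \leq k!$, one obtains
\begin{align*}
\sum_{n}\frac{|b_{k}(n)|^{2}}{n}
= \sum_{\substack{(\alpha_{p}) \\ \sum_{p} \alpha_{p} = k}}\l( \frac{k!}{\prod_{p}\alpha_{p}!} \r)^{2}\prod_{p}\frac{|a(p)|^{2\alpha_{p}}}{p^{\alpha_{p}}}
\leq k!\l( \sum_{p \leq X}\frac{|a(p)|^{2}}{p} \r)^{k}
\end{align*}
by the multinomial theorem, which completes the argument. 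The only technical point is the off-diagonal estimate in the mean-value theorem, but the length condition $X^{k} \leq T/\log{T}$ is precisely what is required to keep the off-diagonal contribution negligible compared to the diagonal, so no essential difficulty arises.
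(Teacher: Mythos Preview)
Your argument is correct and is precisely the standard proof of this lemma; the paper itself does not prove it but simply cites it as Lemma~3 of Soundararajan~\cite{SM2009}, whose proof proceeds exactly as you have written (expand the $k$-th power, apply the mean-value theorem for Dirichlet polynomials using $X^{k}\leq T/\log T$, and bound the diagonal via the multinomial inequality $\bigl(k!/\prod\alpha_{p}!\bigr)^{2}\leq k!\cdot k!/\prod\alpha_{p}!$).
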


\begin{proof}
This is Lemma 3 in \cite{SM2009}.
\end{proof}

\begin{lemma}	\label{ESAE}
Let $m \in \ZZ_{\geq 0}$, $\frac{1}{2} \leq \s < 1$ be fixed with $(m, \s) \not= (0, 1/2)$.
Let $T$, $W$ be large numbers. 
Put $\kappa(\s) = 0$ if $\s = 1/2$, $\kappa(\s) = \s$ otherwise.
Define the set $\mathcal{A} = \mathcal{A}(T, X, W; \s, m)$ by
\begin{align}	\label{def_sA}
\mathcal{A} = \set{t \in [T, 2T]}{\bigg|\sum_{p \leq X}\frac{1}{p^{\s+it}(\log{p})^{m}} \bigg| \leq W}.
\end{align}
Then, there exists a small positive constant $b_{1} = b_{1}(\s, m) \leq 1$ such that
for any $3 \leq X \leq T^{1 / W^{\frac{1}{1-\s}} (\log{W})^{\frac{m+\kappa(\s)}{1-\s}}}$, 
\begin{align*}
\frac{1}{T}\meas([T, 2T] \setminus \mathcal{A}) \ll 
\exp\l( - b_{1} W^{\frac{1}{1-\s}} (\log{W})^{\frac{m+\kappa(\s)}{1-\s}} \r).
\end{align*}
\end{lemma}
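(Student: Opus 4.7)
The plan is to bound $\meas([T,2T]\setminus \mathcal{A})$ by the $2K$-th moment method, applied to the Dirichlet polynomial $F(t) := \sum_{p \leq X}\frac{1}{p^{\s+it}(\log p)^m}$. Using the convexity inequality $|z|^{2K} \leq 2^{K-1}((\Re z)^{2K} + (\Im z)^{2K})$, the event $\{|F| > W\}$ is contained in $\{|\Re F| > W/\sqrt 2\} \cup \{|\Im F| > W/\sqrt 2\}$, so by Chebyshev's inequality it suffices to estimate $\int_T^{2T}(\Re(e^{-i\theta}F(t)))^{2K}\,dt$ for $\theta \in \{0,\pi/2\}$ and divide by $(W/\sqrt{2})^{2K}$.

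For the moment I would apply Lemma \ref{RKL} with $k = 2K$, representing it as the contour integral $T \cdot \frac{(2K)!}{2\pi i}\oint_{|w|=R}\frac{1}{w^{2K+1}}\prod_{p\leq X}I_{0}\l(\frac{w}{p^{\s}(\log p)^m}\r)\,dw$ plus an error of size $O(X^{4K})$. Bounding the contour integral in absolute value by $(2K)!\,R^{-2K}\prod_{p\leq X}I_0(R/(p^{\s}(\log p)^m))$ via $|I_0(w)|\leq I_0(|w|)$, and then invoking Lemma \ref{PB0E} (for $\s=1/2$) or Lemma \ref{PB0EG} (for $1/2<\s<1$) to control the prime product under the side condition $X \geq R^{3}$, gives a bound of the form $T\cdot(2K)!\,R^{-2K}\exp(C\Psi(R))$, where $\Psi(R) = R^{2}/(\log R)^{2m}$ when $\s=1/2$ and $\Psi(R) = R^{1/\s}/(\log R)^{m/\s+1}$ when $\s>1/2$.

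The next step is optimization. Taking $K = \lfloor c_{0}W^{1/(1-\s)}(\log W)^{(m+\kappa(\s))/(1-\s)}\rfloor$ for a sufficiently small $c_{0} = c_{0}(\s, m)$, and choosing $R$ so that $\Psi(R) \asymp K$ (explicitly $R \asymp K^{\s}(\log K)^{m+\kappa(\s)}$), Stirling's formula reduces the bound to $T\cdot(C_{1}c_{0})^{K}$ for an explicit constant $C_{1} = C_{1}(\s, m) > 0$; taking $c_{0} < 1/(2C_{1})$ then produces the claimed $\exp(-b_{1}K) = \exp(-b_{1}W^{1/(1-\s)}(\log W)^{(m+\kappa(\s))/(1-\s)})$. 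The upper bound on $X$ in the hypothesis guarantees $X^{4K}\leq T^{4c_{0}}$, so for $c_{0}<1/4$ the error term $X^{4K}/T$ is dominated by the main bound.

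The main obstacle is to execute the optimization cleanly enough to confirm that the resulting geometric factor $(C_{1}c_{0})^{K}$ is genuinely $<1$: the numerical constants from Stirling's formula and from the exponents in Lemmas \ref{PB0E} and \ref{PB0EG} must be tracked precisely, and the cases $\s = 1/2$ and $\s > 1/2$ need to be handled in parallel but distinctly, because the power of $\log R$ inside $\Psi(R)$ differs between them (which is exactly the source of the two-valued $\kappa(\s)$). A secondary subtlety is the side condition $X \geq R^{3}$: when $X$ is so small that this fails, one observes that $|F(t)|$ is bounded by the trivial sum $\sum_{p \leq X}p^{-\s}(\log p)^{-m}$, so for $W$ exceeding this bound the set $[T,2T]\setminus\mathcal{A}$ is empty and the conclusion is trivial.
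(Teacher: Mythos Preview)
Your approach is correct in outline but takes a different route from the paper.  The paper's proof is considerably more elementary: it splits
\[
\sum_{p\le X}\frac{1}{p^{\s+it}(\log p)^m}
=\sum_{p\le Y}\frac{1}{p^{\s+it}(\log p)^m}+\sum_{Y<p\le X}\frac{1}{p^{\s+it}(\log p)^m},
\qquad Y=k(\log k)^{2-\kappa(\s)},
\]
bounds the short sum trivially by $O_{m}\bigl(k^{1-\s}/(\log k)^{m+\kappa(\s)}\bigr)$ via the prime number theorem, and controls the $2k$-th moment of the tail directly by Lemma~\ref{SLL} (the standard $k!\bigl(\sum_p |a(p)|^2/p\bigr)^k$ estimate), which already yields $\bigl(C_2 k^{1-\s}/(\log k)^{m+\kappa(\s)}\bigr)^{2k}$.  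Chebyshev plus the choice $k=[c\,W^{1/(1-\s)}(\log W)^{(m+\kappa)/(1-\s)}]$ finishes in one line.  Your argument instead routes the moment through the contour-integral machinery of Lemmas~\ref{RKL}, \ref{PB0E}, \ref{PB0EG}; this works and has the virtue of reusing the tools built for the main theorems, but it is heavier than necessary here and forces you to track several extra constants in the optimization.

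One small gap: your treatment of the side condition $X\ge R^{3}$ is incomplete.  When $X$ is only slightly below $R^{3}$ the trivial pointwise bound $|F(t)|\le \sum_{p\le X}p^{-\s}(\log p)^{-m}$ can well exceed $W$ (for instance at $\s=1/2$ it is of size $X^{1/2}/(\log X)^{m+1}$, which for $X\sim R^{3}$ is roughly $W^{3/2}(\log W)^{\cdots}$), so the set need not be empty.  The clean fix is not a case split but monotonicity: since $I_{0}(u)\ge 1$ for real $u$, one has $\prod_{p\le X}I_{0}\bigl(R/(p^{\s}(\log p)^m)\bigr)\le \prod_{p\le \max(X,R^{3})}I_{0}(\cdot)$, and the upper-bound half of Lemmas~\ref{PB0E}/\ref{PB0EG} then applies unconditionally.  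With that adjustment your argument goes through.
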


\begin{proof}
Using the prime number theorem, we can obtain
\begin{align*}
\sum_{p \leq k(\log{k})^{2 - \kappa(\s)}}\frac{1}{p^{\s+it}(\log{p})^{m}}
\ll_{m} \frac{k^{1-\s}}{(\log{k})^{m + \kappa(\s)}}.
\end{align*}
By Lemma \ref{SLL}, we have
\begin{align*}
\frac{1}{T}\int_{T}^{2T}\bigg| \sum_{k(\log{k})^{2 - \kappa(\s)} < p \leq X}
\frac{1}{p^{\s+it}(\log{p})^{m}} \bigg|^{2k}dt
&\ll k! \l( \sum_{p > k(\log{k})^{2 - \kappa(\s)}}\frac{1}{p^{2\s} (\log{p})^{2m}} \r)^{k}\\
&\leq \l(C_{1} \frac{k^{1 - \s}}{(\log{k})^{m + \kappa(\s)}} \r)^{2k}
\end{align*}
for $X^{k} \leq T^{1/2}$, where $C_{1} = C_{1}(\s, m)$ is a positive constant.
Therefore, when $X^{k} \leq T^{1/2}$ it holds that
\begin{align}	\label{MVEDPm}
\frac{1}{T}\int_{T}^{2T}\bigg| \sum_{p \leq X}
\frac{1}{p^{\s+it}(\log{p})^{m}} \bigg|^{2k}dt
\leq \l(C_{2} \frac{k^{1 - \s}}{(\log{k})^{m + \kappa(\s)}} \r)^{2k}
\end{align}
for some constant $C_{2} = C_{2}(\s, m) > 0$.
Hence, we have
\begin{align*}
\frac{1}{T}\meas([T, 2T] \setminus \mathcal{A})
\leq \l(C_{2} \frac{k^{1 - \s}}{W (\log{k})^{m + \kappa(\s)}} \r)^{2k}.
\end{align*}
Choosing $k = [c W^{\frac{1}{1-\s}} (\log{W})^{\frac{m+\kappa(\s)}{1-\s}}]$ 
with $c = c(\s, m)$ a suitably small constant, we obtain this lemma.
\end{proof}


\begin{lemma}	\label{GRKL}
Assume the same situation as in Lemma \ref{ESAE}. 
There exists a small positive constant $b_{2} = b_{2}(\s, m)$ such that 
for $3 \leq x \leq b_{2}W^{\frac{\s}{1-\s}} (\log{W})^{\frac{m + \kappa(\s)}{1-\s}}$, 
$x^{3} \leq X \leq T^{1 / W^{\frac{1}{1-\s}} (\log{W})^{\frac{m+\kappa(\s)}{1-\s}}}$, we have
\begin{align}	\label{GRKL2}
&\frac{1}{T}\int_{\mathcal{A}}
\exp\l( x\Re\bigg( e^{-i\theta} \sum_{p \leq X}\frac{1}{p^{\s+it}(\log{p})^{m}}\bigg) \r)dt\\
&= \prod_{p \leq X}I_{0}\l( \frac{x}{p^{\s}(\log{p})^{m}} \r) 
+ O\l( \exp\l( - x W \r) \r).
\end{align}
\end{lemma}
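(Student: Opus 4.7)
The plan is to combine a truncated Taylor expansion of $e^{xP(t)}$, where $P(t) := \Re(e^{-i\theta}\sum_{p \leq X}p^{-\s-it}(\log p)^{-m})$, with the moment identity of Lemma \ref{RKL}, and to discard the complement $[T, 2T]\setminus \mathcal{A}$ via Cauchy--Schwarz and Lemma \ref{ESAE}. Setting $Q := W^{1/(1-\s)}(\log W)^{(m+\kappa(\s))/(1-\s)}$ for brevity, the hypotheses read $x \leq b_2 Q/W$ and $X^Q \leq T$. First I would fix $K := \lceil 3exW\rceil$ and write
\begin{align*}
\int_\mathcal{A} e^{xP(t)}\,dt = \int_\mathcal{A}\sum_{k=0}^K \frac{x^k P(t)^k}{k!}\,dt + \int_\mathcal{A} r_K(xP(t))\,dt,
\end{align*}
where $r_K(z) := \sum_{k > K}z^k/k!$. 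Since $|xP(t)| \leq xW$ on $\mathcal{A}$ and $K \geq 3exW$, Stirling yields $|r_K(xP)| \ll e^{-xW}$, so this tail contributes only $O(Te^{-xW})$.

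For the truncated sum I would further decompose $\int_\mathcal{A} = \int_T^{2T} - \int_{[T,2T]\setminus \mathcal{A}}$. On $[T, 2T]$, Lemma \ref{RKL} applied to each moment gives
\begin{align*}
\sum_{k=0}^K \frac{x^k}{k!}\int_T^{2T}P(t)^k\,dt = \frac{T}{2\pi i}\oint_{|w|=2x}\prod_{p \leq X}I_0\l(\frac{w}{p^\s(\log p)^m}\r)\sum_{k=0}^K \frac{x^k}{w^{k+1}}\,dw + O\l(\sum_{k=0}^K \frac{x^kX^{2k}}{k!}\r).
\end{align*}
Writing $\sum_{k=0}^K (x/w)^k/w = 1/(w-x) - (x/w)^{K+1}/(w-x)$ splits the contour integral into its residue at $w = x$, namely $\prod_{p \leq X}I_0(x/p^\s(\log p)^m)$, and a remainder dominated by $2^{-K}\prod_{p \leq X}I_0(2x/p^\s(\log p)^m)$; Lemma \ref{PB0E} (for $\s = 1/2$) or Lemma \ref{PB0EG} (for $\s > 1/2$) controls this factor by $\exp(O(b_2^{\a}Q))$ for some $\a > 0$, which is $\ll e^{-xW}$ once $b_2$ is small. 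The off-diagonal residue $\sum(x^kX^{2k}/k!)$ is handled using $X^{2K} \leq T^{2K/Q} \leq T^{O(b_2)}$, yielding a contribution $\ll Te^{-xW}$ for $b_2$ small. For the integral of the truncated sum over $[T,2T]\setminus\mathcal{A}$, I would bound $|\sum_{k \leq K}(xP)^k/k!| \leq e^{x|P|}$ and apply Cauchy--Schwarz: Lemma \ref{ESAE} supplies $(\meas([T,2T]\setminus\mathcal{A}))^{1/2} \ll T^{1/2}e^{-b_1 Q/2}$, while $(\int_T^{2T}e^{2x|P|}\,dt)^{1/2} \ll T^{1/2}\prod I_0(2x/p^\s(\log p)^m)^{1/2}$ follows by expanding $e^{\pm 2xP}$ into moments and applying Lemma \ref{RKL} once more.

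The main obstacle is the simultaneous calibration of $b_2$: it has to be small enough (relative to $b_1$ and to the implicit constants in the $I_0$-product estimates of Lemmas \ref{PB0E} and \ref{PB0EG}) that each of the three error sources --- the Cauchy--Schwarz contribution from $[T,2T]\setminus\mathcal{A}$, the geometric correction $2^{-K}\prod I_0(2x/\cdot)$, and the Lemma \ref{RKL} off-diagonal residue of size $T^{O(b_2)}$ --- is dominated by $Te^{-xW}$. Since $\log\prod I_0(2x/\cdot) \ll b_2^{\a}Q$ in the admissible range and $xW \leq b_2 Q$, this calibration is achievable, and assembling the three pieces gives the claimed identity.
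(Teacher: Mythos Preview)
Your outline follows the paper's strategy (truncate the exponential on $\mathcal{A}$, pass to $[T,2T]$, feed moments into Lemma~\ref{RKL}, close the geometric sum by Cauchy's formula), but two of the error estimates do not go through as written, because the target error is $e^{-xW}$ and must hold \emph{uniformly} for $3\le x\le b_2 W^{\s/(1-\s)}(\log W)^{(m+\kappa(\s))/(1-\s)}$.

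First, the bound $\prod_{p\le X}I_0(2x/p^{\s}(\log p)^{m})\le\exp(O(b_2^{\a}Q))$ is too coarse at the lower end of the $x$-range: for bounded $x$ one has $xW\asymp W$ while $b_2^{\a}Q\asymp b_2^{\a}W^{1/(1-\s)}(\log W)^{(m+\kappa(\s))/(1-\s)}$, which for large $W$ dominates $xW$ no matter how small $b_2$ is; hence $2^{-K}\exp(O(b_2^{\a}Q))\not\ll e^{-xW}$. The correct use of Lemmas~\ref{PB0E}--\ref{PB0EG} is that $\log\prod I_0\ll x^{1/\s}/(\log x)^{(m+\kappa(\s))/\s}=x\cdot x^{(1-\s)/\s}/(\log x)^{(m+\kappa(\s))/\s}\ll xW$ once $b_2$ is small, and this is the bound the paper invokes (on the circle $|w|=ex$) to get $2^{-K}\prod I_0\ll e^{-xW}$.

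Second, the treatment of $\int_{[T,2T]\setminus\mathcal{A}}$ has a genuine gap. Bounding the truncated sum by $e^{x|P|}$ and then claiming $\int_{T}^{2T}e^{2x|P|}\,dt\ll T\prod I_0(2x/\cdot)$ ``by expanding $e^{\pm 2xP}$ into moments and Lemma~\ref{RKL}'' cannot work: summing the error $O(X^{2k}/T)$ of Lemma~\ref{RKL} over all $k$ yields $e^{2xX^{2}}/T$, and you cannot truncate the series on $[T,2T]$ since $|P|$ has no pointwise bound there. The paper avoids this by applying Cauchy--Schwarz \emph{term by term}: for each fixed $k\le K$ one bounds $\bigl|\int_{[T,2T]\setminus\mathcal{A}}P^{k}\bigr|\le(\meas([T,2T]\setminus\mathcal{A}))^{1/2}\bigl(\int_{T}^{2T}|P|^{2k}\bigr)^{1/2}$ and controls the second factor with the $2k$-th moment inequality \eqref{MVEDPm} from the proof of Lemma~\ref{ESAE}, which is legitimate because $X^{2K}\le T^{O(b_2)}\le T^{1/2}$. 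Summing the resulting bounds in $k$ then gives the required $O(Te^{-xW})$.
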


\begin{proof}
By the definition of $\mathcal{A}$ and the Stirling formula, we have
\begin{align}	\label{GRKL3}
&\int_{A}\exp\l( x\Re \l( e^{-i\theta} \sum_{p \leq X}\frac{1}{p^{\s+it}(\log{p})^{m}} \r) \r)dt\\
&= \sum_{k \leq Y}\frac{x^{k}}{k!}\int_{A}
\bigg(\Re \sum_{p \leq X}\frac{e^{-i\theta}}{p^{\s+it}(\log{p})^{m}}\bigg)^{k}dt 
+ O\l(T \sum_{k > Y}\frac{1}{\sqrt{k}}\l( \frac{ex W }{k} \r)^{k} \r),
\end{align}
where $Y = e^{2} x W$.
Here, an easy calculation for geometric sequence shows that the above $O$-term is
$
\ll T\exp\l(- e^{2} x W \r).
$
By using the Cauchy-Schwarz inequality, we find that
\begin{multline*}
\int_{A}\bigg(\Re\sum_{p \leq X}\frac{e^{-i\theta}}{p^{\s+it}(\log{p})^{m}}\bigg)^{k}dt
= \int_{T}^{2T}\bigg(\Re\sum_{p \leq X}\frac{e^{-i\theta}}{p^{\s+it}(\log{p})^{m}}\bigg)^{k}dt +\\
+O\l( (\meas([T, 2T] \setminus \mathcal{A}))^{1/2}
\l(\int_{T}^{2T} \bigg|\sum_{p \leq X}\frac{1}{p^{\s+it}(\log{p})^{m}}\bigg|^{2k}dt\r)^{1/2} \r).
\end{multline*}
When $b_{2} \leq e^{-2}$, from estimate \eqref{MVEDPm} and Lemma \ref{ESAE}, this $O$-term is
\begin{align*}
\ll T\exp\l( -\frac{b_{1}}{2} W^{\frac{1}{1-\s}} (\log{W})^{\frac{m+\kappa(\s)}{1-\s}} \r)
\l( C_{2}\frac{k^{1-\s}}{(\log{k})^{m+\kappa(\s)}} \r)^{k} 
\end{align*}
for $k \leq Y$, where $C_{2} = C_{2}(\s, m)$ is a positive constant.
Also, it holds that
\begin{align*}
\sum_{0 \leq k \leq Y}\frac{x^{k}}{k!}\l( C_{2}\frac{k^{1-\s}}{(\log{k})^{m+\kappa(\s)}} \r)^{k}
&\leq \sum_{k = 0}^{\infty}\frac{1}{k!}\l( C_{2}\frac{x Y^{1-\s}}{(\log{Y})^{m + \kappa(\s)}} \r)^{k}\\
&\leq \exp\l( 2b_{2}^{2-\s} C_{2} W^{\frac{1}{1-\s}}(\log{W})^{\frac{m + \kappa(\s)}{1-\s}} \r)
\end{align*}
for any sufficiently large $W$.
Therefore, choosing $b_{2}$ suitably small, we find that the right hand side is 
$\leq \exp\l( \frac{b_{1}}{6}W^{\frac{1}{1-\s}}(\log{W})^{\frac{m+\kappa(\s)}{1-\s}} \r)$.
Hence, we obtain
\begin{align*}
&\sum_{k \leq Y}\frac{x^{k}}{k!}\int_{A}\bigg(\Re\sum_{p \leq X}\frac{e^{-i\theta}}{p^{\s+it}(\log{p})^{m}}\bigg)^{k}dt\\
&= \sum_{k \leq Y}\frac{x^{k}}{k!}\int_{T}^{2T}
\bigg(\Re\sum_{p \leq X}\frac{e^{-i\theta}}{p^{\s+it}(\log{p})^{m}}\bigg)^{k}dt +\\
&\qqqquad \qqquad 
+ O\l( T\exp\l( -\frac{b_{1}}{3} W^{\frac{1}{1-\s}} (\log{W})^{\frac{m+\kappa(\s)}{1-\s}} \r) \r).
\end{align*}
From these estimates, the left hand side of \eqref{GRKL3} is equal to
\begin{align}	\label{GRKL5}
\sum_{k \leq Y}\frac{x^{k}}{k!}
\int_{T}^{2T}\bigg(\Re\sum_{p \leq X}\frac{e^{-i\theta}}{p^{\s+it}(\log{p})^{m}}\bigg)^{k}dt
+ O\l( T\exp\l( - e^{2} x W \r) \r)
\end{align}
for any sufficiently large $W$ when $b_{2}$ is suitably small.
By Lemma \ref{RKL}, this main term is equal to
\begin{align} \label{GRKL4}
\frac{T}{2\pi i}\oint_{|w| = e x}
\sum_{k \leq Y}\frac{x^{k}}{w^{k+1}}\prod_{p \leq X}I_{0}\l( \frac{w}{p^{\s}(\log{p})^{m}} \r)dw.
\end{align}
By Lemmas \ref{PB0E} and \ref{PB0EG}, there exists a constant $C_{4} = C_{4}(\s, m) > 0$ such that
\begin{align*}
\l|\prod_{p \leq X}I_{0}(w / p^{\s}(\log{p})^{m})\r|
&\leq I_{0}(R / p^{\s}(\log{p})^{m})
\leq \exp\l(C_{4} \frac{x^\frac{1}{\s}}{(\log{x})^{\frac{m + \kappa(\s)}{\s}}} \r).
\end{align*}
Choosing $b_{2}$ as a suitably small constant, the right hand side is $\ll \exp(x W)$.
Moreover, since we see that
\begin{align*}
\l|\sum_{k > Y}\frac{x^{k}}{w^{k+1}}\r|
\ll \exp\l( - e^2 x W \r),
\end{align*}
it holds that
\begin{align*}
&\l|\sum_{k > Y}\frac{x^{k}}{w^{k+1}}\prod_{p \leq X}I_{0}\l( \frac{w}{\sqrt{p}(\log{p})^{m}} \r)\r|
\leq \exp\l( - x W \r)
\end{align*}
for $|w| = e x$. 
Hence, \eqref{GRKL4} is equal to
\begin{align*}
\frac{T}{2\pi i}\oint_{|w| = ex}\sum_{k \leq Y}\frac{1}{w - x}\prod_{p \leq X}I_{0}\l( \frac{w}{p^{\s}(\log{p})^{m}} \r)dw
+ O\l( T\exp\l( - x W \r) \r).
\end{align*}
Thus, by this formula and equation \eqref{GRKL5} and using Cauchy's integral formula, we obtain
\begin{align}
&\frac{1}{T}\int_{A}\exp\l( x\Re\bigg(e^{-i\theta}\sum_{p \leq X}\frac{1}{p^{\s+it}(\log{p})^{m}}\bigg) \r)dt\\
&= \prod_{p \leq X}I_{0}\l( \frac{x}{p^{\s}(\log{p})^{m}} \r) 
+ O\l( \exp\l( - x W \r) \r),
\end{align}
which completes the proof of this lemma.
\end{proof}

\begin{lemma}	\label{ESEPE}
Let $m \in \ZZ_{\geq 1}$, $\frac{1}{2} \leq \s < 1$ be fixed.
Let $T$ be large, $X \geq 3$, and $\Delta > 0$. 
Define the set $\mathcal{B} = \mathcal{B}(T, X, \Delta; \s)$ by
\begin{align*}
\mathcal{B} = \set{t \in [T, 2T]}{\l|\et_{m}(\s + it) 
- \sum_{2 \leq n \leq X}\frac{\Lam(n)}{n^{\s+it}(\log{n})^{m+1}}\r| \leq \Delta X^{1/2-\s}}.
\end{align*}
Then, for $0 < \Delta \leq \l(\frac{\log{T}}{(\log{X})^{2(m+1)}}\r)^{\frac{m}{2m+1}}$, we have
\begin{align*}
\frac{1}{T}\meas([T, 2T] \setminus \mathcal{B}) 
\leq \exp\l( -b_{3} \Delta^2(\log{X})^{2m} \r),
\end{align*}
and for $ \l(\frac{\log{T}}{(\log{X})^{2(m+1)}}\r)^{\frac{m}{2m+1}}
\leq \Delta \leq \frac{\log{T}}{(\log{X})^{m+1}}$, we have
\begin{align*}
\frac{1}{T}\meas([T, 2T] \setminus \mathcal{B})
\leq \exp\l( -b_{4} (\Delta(\log{T})^{m})^{1/(m+1)} \r).
\end{align*}
Here, $b_3$, $b_4$ are absolute positive constants.
\end{lemma}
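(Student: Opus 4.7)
The plan is to combine the approximate formula for $\et_m(\s+it)$ proved by the author in \cite[Theorem 5]{II2019} with the moment bound of Lemma \ref{SLL} and Selberg's zero density estimate \cite[Theorem 1]{SCR}. That formula expresses the difference
\begin{align*}
F(t) := \et_m(\s+it) - \sum_{2 \leq n \leq X}\frac{\Lam(n)}{n^{\s+it}(\log{n})^{m+1}}
\end{align*}
as a Dirichlet-polynomial-type tail $R_{\rm tail}(t)$ over primes $p>X$ with weight comparable to $1/(p^{\s}(\log{p})^{m})$, plus a sum $R_{\rm zero}(t)$ over nontrivial zeros $\rho=\b+i\gamma$ lying close to $\s+it$, plus a negligibly small smooth remainder.

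For $R_{\rm tail}(t)$, I would apply Lemma \ref{SLL} together with the prime number theorem estimate $\sum_{p>X}1/(p^{2\s}(\log{p})^{2m}) \ll X^{1-2\s}/(\log{X})^{2m}$ (with the analogous asymptotic in the boundary case $\s=1/2$) to obtain
\begin{align*}
\frac{1}{T}\int_{T}^{2T}|R_{\rm tail}(t)|^{2k}\,dt
\ll \l(\frac{C_{0}\, k\, X^{1-2\s}}{(\log{X})^{2m}}\r)^{k}
\end{align*}
for any positive integer $k$ with $X^{k} \leq T/\log{T}$. Chebyshev's inequality followed by the optimal choice $k \asymp \Delta^{2}(\log{X})^{2m}$ then produces the Gaussian bound $\exp(-b_{3}\Delta^{2}(\log{X})^{2m})$; this optimal $k$ respects the constraint $X^{k}\leq T/\log{T}$ throughout the first stated range of $\Delta$.

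For $R_{\rm zero}(t)$, the $m$-fold integration defining $\et_{m}$ effectively weights a zero at vertical distance $\asymp 1/\log{T}$ from $\s+it$ by a factor of order $(\log{T})^{-m}$, so having $|R_{\rm zero}(t)| > \tfrac{1}{2}\Delta X^{1/2-\s}$ forces roughly $N \asymp \Delta(\log{T})^{m}X^{1/2-\s}$ zeros to cluster in a short interval around $t$. Selberg's density estimate bounds the measure of such $t$, and a balancing argument yields the weaker estimate $\exp(-b_{4}(\Delta(\log{T})^{m})^{1/(m+1)})$. In the first stated range of $\Delta$ the $R_{\rm tail}$ bound is binding; in the second range the Lemma \ref{SLL} constraint $X^{k}\leq T/\log{T}$ blocks the moment method from reaching the Gaussian exponent and the zero-cluster bound becomes operative. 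A direct calculation shows that at $\Delta \asymp (\log{T}/(\log{X})^{2(m+1)})^{m/(2m+1)}$ the two bounds coincide, pinning down the stated transition.

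The hardest step is the zero contribution: one needs to extract from \cite[Theorem 5]{II2019} the exact weighting of each nearby zero in $F(t)$ and then convert Selberg's global zero density bound into a sharp pointwise large-deviation estimate for the measure of $t$ supporting a cluster of $N$ zeros in a short interval. The dampening factor $(\log{T})^{-m}$ per zero, originating in the $m$ iterated integrations, is precisely what produces the fractional exponent $1/(m+1)$ in the final bound.
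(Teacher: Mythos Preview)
Your strategy departs from the paper's, and the zero-contribution step as written is not a proof.

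The paper does not split $F(t)$ at all. It invokes \cite[Theorem~5]{II2019} (together with \eqref{RLEE}) as a \emph{ready-made $2k$-th moment bound}
\[
\frac{1}{T}\int_{T}^{2T}\Bigl|\et_{m}(\s + it) - \sum_{2 \leq n \leq X}\frac{\Lam(n)}{n^{\s+it}(\log{n})^{m+1}}\Bigr|^{2k}dt
\ll C^k k!\,\frac{X^{k(1-2\s)}}{(\log X)^{2km}} + C^{k} k^{2k(m+1)}\frac{T^{(1-2\s)/135}}{(\log T)^{2km}},
\]
valid for $X \leq T^{1/135k}$; both the ``tail'' and the ``zero'' information are already packaged inside this single estimate. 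Chebyshev and the choices $k=[c\,\Delta^2(\log X)^{2m}]+1$ and $k=[c\,(\Delta(\log T)^m)^{1/(m+1)}]+1$ then give the two stated bounds immediately. So \cite[Theorem~5]{II2019} is not a pointwise approximate formula for you to post-process: it \emph{is} the moment estimate, with Selberg's density theorem already applied inside \cite{II2019}.

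Your tail treatment via Lemma~\ref{SLL} would essentially reproduce the first term above, but your zero step has a real gap. The assertion that $|R_{\rm zero}(t)|>\tfrac12\Delta X^{1/2-\s}$ forces $N\asymp\Delta(\log T)^{m}$ zeros to cluster near $t$ is not justified: a zero at distance $o(1/\log T)$ from $\s+it$ can individually contribute far more than $(\log T)^{-m}$, so a large $R_{\rm zero}$ need not come from many zeros. More seriously, Selberg's density theorem bounds $N(\s,T)$, not the measure of $t$ supporting a local cluster, and your ``balancing argument'' yielding the exponent $1/(m+1)$ is asserted rather than derived; no natural clustering heuristic produces that particular power. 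In the paper's route the $1/(m+1)$ arises transparently from optimizing $k$ against the factor $k^{2k(m+1)}$ in the second moment term --- and establishing that term (done in \cite{II2019}) is itself the nontrivial input. If you want a self-contained argument you must reproduce that moment computation rather than appeal to a clustering picture.
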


\begin{proof}
By equation \eqref{RLEE} and Theorem 5 of \cite{II2019}, we have
\begin{align*}
&\frac{1}{T}\int_{T}^{2T}\l|\et_{m}(\s + it) 
- \sum_{2 \leq n \leq X}\frac{\Lam(n)}{n^{\s+it}(\log{n})^{m+1}}\r|^{2k}dt\\
&\ll C^k k! \frac{X^{k(1-2\s)}}{(\log{X})^{2km}} 
+ C^{k} k^{2k(m+1)}\frac{T^{\frac{1-2\s}{135}}}{(\log{T})^{2km}}
\end{align*}
for $3 \leq X \leq T^{\frac{1}{135k}}$, where $C$ is an absolute positive constant.
Therefore, we obtain
\begin{align*}
\frac{1}{T}\meas([T, 2T] \setminus \mathcal{B})
\ll \l( \frac{C k^{1/2}}{\Delta(\log{X})^{m}} \r)^{2k} 
+ \l( \frac{C k^{m+1}}{\Delta (\log{T})^{m}} \r)^{2k}.
\end{align*}
When $\Delta \leq \l(\frac{\log{T}}{(\log{X})^{2(m+1)}}\r)^{\frac{m}{2m+1}}$, 
putting $k = [c \Delta^2 (\log{X})^{2m}] + 1$ with $c$ a suitably small constant, we have
\begin{align*}
\frac{1}{T}\meas([T, 2T] \setminus \mc{B})
\leq \exp\l( -b_{3} \Delta^{2} (\log{X})^{2m} \r)
\end{align*}
for some absolute constant $b_{3} > 0$.
When the inequality $ \l(\frac{\log{T}}{(\log{X})^{2(m+1)}}\r)^{\frac{m}{2m+1}}
\leq \Delta \leq \frac{\log{T}}{(\log{X})^{m+1}}$ holds,
by choosing $k = \l[c(\Delta(\log{T})^{m})^{\frac{1}{m+1}}\r] + 1$ with $c$ a suitably small constant, we have
\begin{align*}
\frac{1}{T}\meas([T, 2T] \setminus \mc{B})
\leq \exp\l( -b_{4} (\Delta(\log{T})^{m})^{1/(m+1)} \r)
\end{align*}
for some absolute constant $b_{4} > 0$.
Thus, we obtain this lemma.
\end{proof}

\section{\textbf{Proofs of Proposition \ref{Main_Prop_EVE} and Theorem \ref{Main_Thm_EVE}}}

In this section, we prove Proposition \ref{Main_Prop_EVE} and Theorem \ref{Main_Thm_EVE}.

\begin{proof}[Proof of Proposition \ref{Main_Prop_EVE}]
Let $m \in \ZZ_{\geq 1}$, $\theta \in \RR$ be fixed.
Let $T$, $V$ be large numbers with $V \leq a_{2}\frac{\sqrt{\log{T}}}{(\log{\log{T}})^{m+\frac{1}{2}}}$, 
and let $X$ be a real parameter with $V^{4} \leq X \leq T^{a_{3} / V^{2} (\log{V})^{2m}}$.
Here, $a_{2} = a_{2}(m)$, $a_{3} = a_{3}(m)$ are positive constants to be chosen later.
Moreover, let $W > 0$, $3 \leq x \leq b_{2} W(\log{W})^{2m}$ be numbers to be chosen later,
where $b_{2} = b_{2}(1/2, m)$ is the same constant as in Lemma \ref{GRKL}.
Put
\begin{align*}
\S^{*}(T, V) 
:= \set{t \in \mathcal{A}}{\Re\bigg( e^{-i\theta} \sum_{p \leq X}\frac{1}{p^{1/2+it}(\log{p})^{m}}\bigg) > V}.
\end{align*}
Here, the set $\mathcal{A} = \mathcal{A}(T, X, W; 1/2, m)$ is defined by \eqref{def_sA}.
Then, for $x > 0$, we have
\begin{align*}
\int_{A}\exp\l( x\Re\bigg( e^{-i\theta} \sum_{p \leq X}\frac{1}{p^{1/2+it}(\log{p})^{m}}\bigg) \r)dt
= x\int_{-\infty}^{\infty}e^{x v}\meas(\S^{*}(T, v))dv.
\end{align*}
By this equation and Lemma \ref{GRKL}, it holds that
\begin{align*}
\frac{1}{T}\int_{-\infty}^{\infty}e^{x v}\meas(\S^{*}(T, v))dv
= \frac{1}{x}\prod_{p \leq X}I_{0}\l( \frac{x}{p^{\s}(\log{p})^{m}} \r) 
+ O\l( \frac{1}{x}\exp\l( - x W \r) \r)
\end{align*}
when $x^{3} \leq X \leq T^{1 / W^2(\log{W})^{2m}}$.
Therefore, by Lemma \ref{PB0E}, we obtain
\begin{align}	\label{PEVE1}
&\frac{1}{T}\int_{-\infty}^{\infty}e^{x v}\meas(\S^{*}(T, v))dv\\
&= \exp\l( \frac{x^2}{8m(2\log{x})^{2m}}\l( 1 - \l(\frac{\log{x^2}}{\log{X}}\r)^{2m} 
+ O_{m}\l( \frac{\log{\log{x}}}{\log{x}} \r) \r) \r)
\end{align}
for $x^{3} \leq X \leq T^{1 / W^2(\log{W})^{2m}}$.
Now, we decide the parameters $x$, $W$ as satisfying the equations
\begin{align}	\label{def_x_EVE}
V = \frac{2x}{8m (2\log{x})^{2m}}\l( 1 - \l( \frac{\log{x^2}}{\log{X}} \r)^{2m} \r),
\end{align}
and $W = 8m 4^{m} K_{1} V$, respectively. 
The constant $K_{1} = K_{1}(m)$ is defined as $K_{1} = \max\{b_{1}^{-1}, b_{2}^{-1}\}$, 
and $b_{1}$ is the same constant as in Lemma \ref{ESAE}.
Then, this $x$ satisfies 
\begin{align*}
x = \frac{4m 4^{m}}{1 - (\log{V^{2}} / \log{X})^{2m}} V(\log{V})^{2m}
\l( 1 + O_{m}(\log{\log{V}} / \log{V}) \r),
\end{align*} 
and hence we can take out $x$ from the range $3 \leq x \leq b_{2} W (\log{W})^{2m}$ for any large $V$.
Also, when $a_{2}$, $a_{3}$ are suitably small, 
the inequalities $x^{3} \leq T^{1 / W^2(\log{W})^{2m}}$ and 
$x^{3} \leq X \leq T^{1 / W^2(\log{W})^{2m}}$ hold for any large $V$.
Moreover, by using Lemma \ref{ESAE}, 
the inequality $\meas([T, 2T] \setminus \mathcal{A}) \leq T \exp\l( -8m 4^{m}V^{2}(\log{V})^{2m} \r)$ holds.
Therefore, we obtain
\begin{align}
&\frac{1}{T}\meas\set{t \in [T, 2T]}{\Re\l( e^{-i\theta}\sum_{p \leq X}\frac{1}{p^{1/2+it}(\log{p})^{m}} \r) > V}\\
&= \frac{1}{T}\meas(\S^{*}(T, V)) + O\l( \frac{1}{T}\meas([T, 2T] \setminus \mathcal{A}) \r)\\
\label{PEVE2}
&= \frac{1}{T}\meas(\S^{*}(T, V)) + O\l( \exp\l( -8m 4^{m}V^{2}(\log{V})^{2m} \r) \r).
\end{align}
Put 
$
\e = K_{2}\sqrt{\log{\log{x}} / \log{x}}
$
with $K_{2} = K_{2}(m)$ a sufficiently large constant.
Then, by using equation \eqref{PEVE1}, we find that 
\begin{align*}
&\int_{-\infty}^{V(1-\e)}e^{x v}\meas(\S^{*}(T, v))dv
\leq e^{\e x V(1-\e)}\int_{-\infty}^{\infty}e^{x(1-\e)v}\meas(\S^{*}(T, v))dv\\
&= T\exp\l( \frac{x^2}{8m(2\log{x})^{2m}}\l( 1 - \l( \frac{\log{x^2}}{\log{X}} \r)^{2m}
- \frac{\e^2}{3} + O_{m}\l( \frac{\log{\log{x}}}{\log{x}} \r) \r) \r)\\
&\leq \frac{1}{3}\int_{-\infty}^{\infty}e^{x v}\meas(\S^{*}(T, v))dv.
\end{align*}
Similarly, we find that
\begin{align*}
&\int_{V(1+\e)}^{\infty}e^{x v}\meas(\S^{*}(T, v))dv
\leq e^{-\e x V(1+\e)}\int_{-\infty}^{\infty}e^{x(1+\e)v}\meas(\S^{*}(T, v))dv\\
&= T\exp\l( \frac{x^2}{8m(2\log{x})^{2m}}\l( 1 - \l(\frac{\log{x^2}}{\log{X}} \r)^{2m} - \frac{\e^2}{3} 
+ O_{m}\l(\frac{\log{\log{x}}}{\log{x}} \r) \r) \r)\\&
\leq \frac{1}{3}\int_{-\infty}^{\infty}e^{x v}\meas(\S^{*}(T, v))dv.
\end{align*}
Hence, we have
\begin{align*}
&\frac{1}{T}\int_{V(1 - \e)}^{V(1 + \e)}e^{x v}\meas(\S^{*}(T, v))dv\\
&= \exp\l( \frac{x^2}{8m(2\log{x})^{2m}}\l( 1 - \l(\frac{\log{x^2}}{\log{X}}\r)^{2m} 
+ O_{m}\l( \frac{\log{\log{x}}}{\log{x}} \r) \r) \r).
\end{align*}
Moreover, since $\meas(\S^{*}(T, v))$ is a nonincreasing function 
and $\int_{V(1 - \e)}^{V(1 + \e)}e^{x v}dv = \exp(x V (1 + O(\e)))$, it holds that
\begin{align*}
&\frac{1}{T}\meas(\S^{*}(T, V(1 + \e)))\\
&\leq \exp\l( -\frac{x^2}{8m(2\log{x})^{2m}}\l( 1 - \l(\frac{\log{x^2}}{\log{X}}\r)^{2m} 
+ O_{m}\l( \sqrt{\frac{\log{\log{x}}}{\log{x}}} \r) \r) \r)\\
&\leq \frac{1}{T}\meas(\S^{*}(T, V(1 - \e))).
\end{align*}
In particular, since $x$ satisfies 
\begin{align*}
x = 4m V(2\log{V})^{2m}\l\{ \l(1 + (\log{x^2} / \log{X})^{2m}\r)^{-1}
+ O_{m}(\log{\log{V}} / \log{V}) \r\},
\end{align*} 
the second term of the above inequalities is equal to
\begin{align*}
\exp\l( - \frac{2m 4^{m}}{1 - \l(\frac{\log{V^2}}{\log{X}}\r)^{m}} V^2(\log{V})^{2m}\l(
1 + O_{m}\l( \sqrt{\frac{\log{\log{V}}}{\log{V}}} \r) \r) \r).
\end{align*}
Additionally, if we change the above $V$ to $V(1 + O(\e))$, the above form does not change.
Hence, we obtain
\begin{align*}
&\frac{1}{T}\meas(\S^{*}(T, V))
=\exp\l( - \frac{2m 4^{m} V^2(\log{V})^{2m}}{1 - \l(\frac{\log{V^2}}{\log{X}}\r)^{m}}
\l( 1 + O_{m}\l( \sqrt{\frac{\log{\log{V}}}{\log{V}}} \r) \r) \r).
\end{align*} 
By this equation and \eqref{PEVE2}, we complete the proof of Proposition \ref{Main_Prop_EVE}.
\end{proof}

\begin{proof}[Proof of Theorem \ref{Main_Thm_EVE}]
Let $T$, $V$ be sufficiently large parameters satisfying
$V \leq a_{1}\d{\l(\frac{\log{T}}{(\log{\log{T}})^{2m+2}}\r)^{\frac{m}{2m+1}}}$, 
where $a_{1} = a_{1}(m)$ is a suitably small constant to be chosen later.
Let $a_{3}$, $b_{4}$ be the same constants as in Proposition \ref{Main_Prop_EVE} and Lemma \ref{ESEPE}.
Put $X = T^{b_{5}/V^{2}(\log{V})^{2m}}$ with $b_{5} = \min\{ a_{3}, b_{4}(4m 4^{m})^{-1} \}$.
Note that this $X$ satisfies the inequality 
$X \geq \exp\l( (\log{T})^{\frac{1}{2m + 1} - \e} \r) \geq V^{4}$ when $T$ is large.
Then, applying Lemma \ref{ESEPE} as 
$\Delta = \frac{\log{T}}{(\log{X})^{m + 1}} = \frac{V^{2m+2}(\log{V})^{2m(m + 1)}}{b_{5}^{m+1}(\log{T})^{m}}$, 
we find that there exists a set $\mc{B} \subset [T, 2T]$ such that 
$\meas([T, 2T] \setminus \mc{B}) \leq T \exp\l(-4m 4^{m} V^2 (\log{V})^{2m}\r)$, 
and for all $t \in \mathcal{B}$
\begin{align*}
\l|\et_{m}(1/2 + it) - \sum_{p \leq X}\frac{1}{p^{1/2+it}(\log{p})^{m}}\r| 
&\leq \l(\frac{V^{2m+1}(\log{V})^{2m(m + 1)}}{b_{6}^{m+1}(\log{T})^{m}} + \frac{c}{V}\r)V\\
&=: \delta_{m} V,
\end{align*}
say. Here the constant $c$ indicates the value $\sum_{p^{k}, k \geq 2}\frac{1}{p^{k/2}(\log{p^{k}})^{m}}$. 
Now, we decide the number $a_{1}$ such that $\delta_{m} \leq 1/2$.
Then, it holds that
\begin{align*}
&\meas\set{t \in \mc{B}}{\Re e^{-i\theta} \sum_{p \leq X}\frac{1}{p^{1/2+it}(\log{p})^{m}} 
> V(1 + \delta_{m})}\\
&\leq \meas\set{t \in \mc{B}}{\Re e^{-i\theta} \et_{m}(1/2+it) > V}\\
&\leq \meas\set{t \in \mc{B}}{\Re e^{-i\theta} \sum_{p \leq X}\frac{1}{p^{1/2+it}(\log{p})^{m}} 
> V(1 - \delta_{m})}.
\end{align*}
Hence, by these inequalities and Proposition \ref{Main_Prop_EVE}, we have
\begin{align*}
&\frac{1}{T}\meas\set{t \in \mc{B}}{\Re e^{-i\theta} \et_{m}(1/2+it) > V} =\\
&\exp\l( - 2m 4^{m} V^2(\log{V})^{2m}
\l( 1 + O_{m}\l(\frac{V^{2m+1} (\log{V})^{2m(m+1)}}{(\log{T})^{m}} 
+ \sqrt{\frac{\log{\log{V}}}{\log{V}}} \r) \r) \r).
\end{align*}
Thus, by this equation and $\meas([T, 2T] \setminus \mc{B}) \leq T\exp\l(-4m4^{m} V^2(\log{V})^{2m} \r)$, 
we complete the proof of Theorem \ref{Main_Thm_EVE}.
\end{proof}

\section{\textbf{Proofs of Proposition \ref{Main_Prop_EVZE} and Theorem \ref{Main_Thm_EVZE}}}

Some parts in the proof of Proposition \ref{Main_Prop_EVZE} are written briefly because 
many points are similar to the proof of Proposition \ref{Main_Thm_EVE}.

\begin{proof}[Proof of Proposition \ref{Main_Prop_EVZE}]
Let $m \in \ZZ_{\geq 0}$, $\frac{1}{2} < \s < 1$ be fixed.
Let $T$, $V$ be large numbers with $V \leq a_{5}\frac{(\log{T})^{1-\s}}{(\log{\log{T}})^{m+1}}$,
and let $X$ be a real parameter with 
$V^{\frac{4}{1-\s}} \leq X \leq T^{a_{6} / V^{\frac{1}{1-\s}}(\log{V})^{\frac{m+\s}{1-\s}}}$.
Here $a_{5} = a_{5}(\s, m)$, $a_{6} = a_{6}(\s, m)$ are positive constants to be chosen later.
Moreover, let $W > 0$, $3 \leq x \leq b_{2}W^{\frac{\s}{1-\s}}(\log{W})^{\frac{m+\s}{1-\s}}$ 
be numbers to be chosen later. Here, $b_{2} = b_{2}(\s, m)$ is the same constant as in Lemma \ref{GRKL}.
Put
\begin{align*}
\S_{\s}^{*}(T, V)
:= \set{t \in \mathcal{A}}{\Re\l( e^{-i\theta}\sum_{p \leq X}\frac{1}{p^{\s+it}(\log{p})^{m}} \r) > V},
\end{align*}
where $\mathcal{A} = \mathcal{A}(T, X, V; \s, m)$ is the set defined by \eqref{def_sA}.
Using Lemmas \ref{PB0EG}, \ref{GRKL}, and the equation
\begin{align*}
\int_{\mathcal{A}}\exp\l( x \Re\l( e^{-i\theta} \sum_{p \leq X}\frac{1}{p^{\s}(\log{p})^{m}} \r) \r)
= x \int_{-\infty}^{\infty}e^{x v}\meas(\S_{\s}^{*}(T, v))dv,
\end{align*}
we obtain
\begin{align}	\label{MPEVZE1}
&\frac{1}{T}\int_{-\infty}^{\infty}e^{x v}\meas(\S_{\s}^{*}(T, v))dv\\
&= \exp\l( \frac{\s^{\frac{m}{\s}} G(\s) x^{\frac{1}{\s}}}{(\log{x})^{\frac{m}{\s} + 1}} 
\l(1 + O\l(\frac{1 + m\log{\log{x}}}{\log{x}}\r)\r) \r)
\end{align}
for $x^{3} \leq X \leq T^{1/W^{\frac{1}{1-\s}}(\log{W})^{\frac{m+\s}{1-\s}}}$.
Here, we decide the parameters $x$, $W$ as the numbers satisfying the equations
\begin{align}	\label{MPEVZE2}
V = \frac{\s^{\frac{m}{\s}} G(\s) x^{\frac{1}{\s} - 1}}{\s (\log{x})^{\frac{m}{\s} + 1}},
\end{align}
and $W = \l( 2\frac{A_{m}(\s)}{1-\s}K_{3} \r)^{\frac{1-\s}{\s}}V$, respectively. 
The constant $K_{3} = K_{3}(\s, m)$ is defined as $K_{3} = \max\{ b_{1}^{-1}, b_{2}^{-1} \}$, 
where $b_{1}$ is the same constant as in Lemma \ref{ESAE}. 
Then, this $x$ satisfies 
$
x 
= \frac{A_{m}(\s)}{1-\s}V^{\frac{\s}{1-\s}} (\log{V})^{\frac{m + \s}{1 - \s}}
(1 + O(\log{\log{V}} / \log{V}))
$
, and so we can pick up this $x$ 
from the range $3 \leq x \leq b_{2} W^{\frac{\s}{1-\s}} (\log{W})^{\frac{m + \s}{1 - \s}}$
for any large $V$.
Also, choosing $a_{5}$, $a_{6}$ as suitably small constants, 
we find that the inequalities $x^{3} \leq T^{1 / W^{\frac{1}{1-\s}}(\log{W})^{\frac{m + \s}{1-\s}}}$ and
$x^{3} \leq X \leq T^{1 / W^{\frac{1}{1-\s}}(\log{W})^{\frac{m + \s}{1-\s}}}$ hold for any large $V$.
Moreover, by Lemma \ref{ESAE}, the inequality 
$\meas([T, 2T] \setminus \mathcal{A}) \leq T\exp\l( -2A_{m}(\s)V^{\frac{1}{1-\s}}(\log{V})^{\frac{m+\s}{1-\s}} \r)$
holds.

Putting $\e = K _{4}\sqrt{\frac{1 + m\log{\log{x}}}{\log{x}}}$ with $K_{4} = K_{4}(\s, m)$ a suitably large constant 
and using equation \eqref{MPEVZE1}, we have
\begin{align*}
&\int_{-\infty}^{V(1-\e)}e^{x v}\meas(\S_{\s}^{*}(T, v; X))dv
\leq \frac{1}{3}\int_{-\infty}^{\infty}e^{x v}\meas(\S_{\s}^{*}(T, v; X))dv,
\end{align*}
and
\begin{align*}
&\int_{V(1+\e)}^{\infty}e^{x v}\meas(\S_{\s}^{*}(T, v; X))dv
\leq \frac{1}{3}\int_{-\infty}^{\infty}e^{x v}\meas(\S_{\s}^{*}(T, v; X))dv.
\end{align*}
Therefore, we obtain
\begin{align*}
&\frac{1}{T}\int_{(1 - \e)V}^{(1 + \e)V}e^{x v}\meas(\S_{\s}^{*}(T, v))dv\\
&= \exp\l( \frac{\s^{\frac{m}{\s}} G(\s) x^{\frac{1}{\s}}}{(\log{x})^{\frac{m}{\s} + 1}} 
\l(1 + O\l(\frac{1 + m\log{\log{x}}}{\log{x}}\r)\r) \r).
\end{align*}
Moreover, since $\meas(\S^{*}(T, v))$ is a nonincreasing function 
and $\int_{V(1 - \e)}^{V(1 + \e)}e^{x v}dv = \exp(x V (1 + O(\e)))$, it holds that
\begin{align*}
&\frac{1}{T}\meas(\S^{*}(T, V(1 + \e); X))\\
&\leq \exp\l( -\frac{1-\s}{\s}\frac{\s^{\frac{m}{\s}}G(\s)x^{\frac{1}{\s}}}{(\log{x})^{\frac{m}{\s}+1}}
\l( 1 + O\l( \e \r) \r) \r)\\
&\leq \frac{1}{T}\meas(\S^{*}(T, V(1 - \e); X)).
\end{align*}
In particular, as $x$ is the solution of equation \eqref{MPEVZE1},  
the above second term is equal to
\begin{align*}
\exp\l( - A_{m}(\s) V^{\frac{1}{1-\s}}(\log{V})^{\frac{m+\s}{1 - \s}}\l( 1 + R \r) \r), 
\end{align*}
where 
\begin{align*}
R 
&\ll \sqrt{\frac{1 + m\log{\log{x}}}{\log{x}}}
\ll \sqrt{\frac{1 + m\log{\log{V}}}{\log{V}}}.
\end{align*}
Additionally, if we change the above $V$ to $V(1 + O(\e))$, the above form does not change.
Thus, we obtain
\begin{align*}
&\frac{1}{T}\meas(\S^{*}(T, V; X))\\
&=\exp\l( - A_{m}(\s) V^{\frac{1}{1-\s}}(\log{V})^{\frac{m+\s}{1 - \s}}
\l( 1 + O\l(\sqrt{\frac{1 + m\log{\log{V}}}{\log{V}}}\r) \r) \r).
\end{align*} 
By this equation 
and $\meas([T, 2T] \setminus \mathcal{A}) \leq T\exp\l( -2A_{m}(\s)V^{\frac{1}{1-\s}}(\log{V})^{\frac{m+\s}{1-\s}} \r)$, 
we obtain Proposition \ref{Main_Prop_EVZE}.
\end{proof}

\begin{proof}[Proof of Theorem \ref{Main_Thm_EVZE}]
We show only the case $m \geq 1$ because
the case $m = 0$ can be shown similarly by use of Lemma 2.2 in \cite{GS2006} instead of Lemma \ref{ESEPE}.

Let $m \in \ZZ_{\geq 1}$, $1/2 < \s < 1$.
Let $a_{5}$, $a_{6}$, and $b_{4}$ be the same constants as in Proposition \ref{Main_Prop_EVZE} and Lemma \ref{ESEPE}.
Let $T$, $V$ be sufficiently large positive numbers satisfying 
the inequality $V \leq a_{4}\frac{(\log{T})^{1-\s}}{(\log{\log{T}})^{m+1}}$, 
where $a_{4} = a_{4}(\s, m)$ is a suitably small  constant less than $a_{5}$ to be chosen later.
Put $X = T^{b_{6}/ V^{\frac{1}{1-\s}} (\log{V})^{\frac{m+\s}{1-\s}}}$ with 
$b_{6} = \min\{ a_{6}, b_{4} (2A_{m}(\s))^{-1} \}$.
Then we decide the number $a_{4}$ as satisfying
$X^{\s - 1/2} \geq (\log{T})^{6}$.
Applying Lemma \ref{ESEPE} as $\Delta = \frac{\log{T}}{(\log{X})^{m+1}} 
= \frac{\l(V^{\frac{1}{1-\s}} (\log{V})^{\frac{m+\s}{1-\s}}\r)^{m+1}}{b_{6}^{m+1}(\log{T})^{m}}$, 
we find that there exists a set $\mathcal{B} \subset [T, 2T]$ such that
$\meas([T, 2T] \setminus \mathcal{B}) \leq T\exp\l(-2A_{m}(\s) V^{\frac{1}{1-\s}} (\log{V})^{\frac{m+\s}{1-\s}} \r)$, 
and for all $t \in \mathcal{B}$
\begin{align*}
\l|\et_{m}(\s + it) - \sum_{p \leq X}\frac{1}{p^{\s+it}(\log{p})^{m}}\r| 
&\leq \frac{\l(V^{\frac{1}{1-\s}} (\log{V})^{\frac{m+\s}{1-\s}}\r)^{m+1}}{X^{\s-1/2}b_{6}^{m+1}(\log{T})^{m}} + c.
\end{align*}
Here, $c = \sum_{p^{k}, k\geq 2}\frac{\Lam(p^{k})}{p^{k\s}(\log{p^{k}})^{m+1}}$. 
Therefore, the right hand side is $\leq K_{4}$ with $K_{4} = K_{4}(m, \s)$ a positive constant.
Then, it holds that
\begin{align*}
&\meas\set{t \in \mathcal{B}}{\Re e^{-i\theta} \sum_{p \leq X}\frac{1}{p^{\s+it}(\log{p})^{m}} 
> V(1 + K_{4} V^{-1})}\\
&\leq \meas\set{t \in \mathcal{B}}{\Re e^{-i\theta} \et_{m}(\s+it) > V}\\
&\leq \meas\set{t \in \mathcal{B}}{\Re e^{-i\theta} \sum_{p \leq X}\frac{1}{p^{\s+it}(\log{p})^{m}} 
> V(1 - K_{4}V^{-1})}.
\end{align*}
Hence, by these inequalities and Proposition \ref{Main_Prop_EVE}, we have
\begin{align*}
&\frac{1}{T}\meas\set{t \in \mathcal{B}}{\Re e^{-i\theta} \et_{m}(\s+it) > V}\\
&= \exp\l( - A_{m}(\s) V^{\frac{1}{1-\s}}(\log{V})^{\frac{m+\s}{1-\s}}
\l( 1 + O\l(\sqrt{\frac{1 + m\log{\log{V}}}{\log{V}}}\r) \r) \r).
\end{align*}
By this equation and 
$\meas([T, 2T] \setminus \mathcal{B}) \leq T\exp\l(-2 A_{m}(\s) V^{\frac{1}{1-\s}}(\log{V})^{\frac{m+\s}{1-\s}} \r)$, 
we complete the proof of Theorem \ref{Main_Thm_EVZE}.
\end{proof}

\begin{acknowledgment*}
The author would like to thank Professor Kohji Matsumoto for his helpful comments.
This work is supported by Grant-in-Aid for JSPS Research Fellow (Grant Number: 19J11223).
\end{acknowledgment*}

\end{document}